\newtheorem{theorem}{Theorem}[section]
\newtheorem{lemma}[theorem]{Lemma}
\newtheorem{proposition}[theorem]{Proposition}
\theoremstyle{definition}
\theoremstyle{remark}
\newtheorem{remark}[theorem]{Remark}
\numberwithin{equation}{section}
\newcommand{\p}[1]{\mathfrak{#1}}
\newcommand{\ppdots}[1]{{{\mathfrak{p}}_1}, \dots ,{{\mathfrak{p}}_{#1}}}
\newcommand{\qqdots}[1]{{{\mathfrak{q}}_1}, \dots ,{{\mathfrak{q}}_{#1}}}
\newcommand{\ass}[1]{\mathrm{Ass}(#1)}
\newcommand{\supp}[1]{\mathrm{Supp}(#1)}
\newcommand{\ann}[1]{\mathrm{Ann}(#1)}
\newcommand{\ppp}[1]{{{\mathfrak{p}}_1}^{r_1} \cdots {{\mathfrak{p}}_{#1}}^{r_{#1}}}
\newcommand{\ppn}[1]{{{\mathfrak{p}}_1}^{n_1} \cdots {{\mathfrak{p}}_{#1}}^{n_{#1}}}
\newcommand{\pp}[1]{{{\mathfrak{p}}_1} \cdots {{\mathfrak{p}}_{#1}}}
\newcommand{\pr}[1]{\mathfrak{#1}}
\newcommand{\assr}[1]{\mathrm{Ass}(#1)}
\newcommand{\prp}[1]{{{\mathfrak{p}}_1} \cdots {{\mathfrak{p}}_{#1}}}
\begin{document}

\setcounter{page}{1}

\title[Submodules Having the Same Prime Factorization]{Submodules Having the Same Generalized Prime Ideal Factorization}

\author[Thulasi, Duraivel, and Mangayarcarassy]{K. R. Thulasi$^{*}$, T. Duraivel, and S. Mangayarcarassy}

\thanks{{The first author was supported by INSPIRE Fellowship (IF170488) of the Department of Science and Technology (DST), Government of India.\\}{\scriptsize
\hskip -0.4 true cm MSC(2010): Primary: 13A05; Secondary: 13A15, 13E05, 13E15
\newline Keywords: prime submodules, prime filtration, Noetherian ring, prime ideal factorization, regular prime extension filtration.\\
$*$Corresponding author }}
\begin{abstract}
In our recent work, we introduced a generalization of the prime ideal factorization in Dedekind domains for submodules of finitely generated modules over Noetherian rings. In this article, we find conditions for the intersection of two submodules to have the same factorization as the submodules. We also find the relation between the factorizations of a submodule $N$ in an $R$-module $M$ and the ideal $\ann{M/N}$ in the ring $R$ and give a condition for their equality.
\end{abstract}

\maketitle
\section{Introduction}
Throughout this article, $R$ will be a commutative Noetherian ring with identity, and $M$ will be a finitely generated unitary $R$-module. The reference for standard terminology and notations will be \cite{C} and \cite{D}.

Let $N$ be a proper submodule of an $R$-module $M$. Then the ideal $(N : M)$ in $R$ is prime if for any $a \in R$ and $x \in M$, $ax \in N$ implies $a \in (N:M)$ or $x \in N$. We say $N$ is a $\pr p$-prime submodule of $M$ when $(N : M) = \pr p$, and in this case, $\assr{M/N} = \{\pr p\}$ \cite[Theorem~1]{CPLu}.

We say a submodule $K$ of $M$ is a $\pr p$-prime extension of $N$ in $M$ and denote it as $N \overset{\pr p} \subset K$ if $N$ is a $\pr p$-prime submodule of $K$. A $\pr p$-prime extension $K$ of $N$ is said to be maximal in $M$ if there is no $\pr p$-prime extension $L$ of $N$ in $M$ such that $L \supset K$. Since $M$ is Noetherian, maximal $\pr p$-prime extensions exist. It is proved that if $\pr p$ is a maximal element in $\assr{M/N}$, then $(N : \pr p)$ is the unique maximal $\pr p$-prime extension of $N$ in $M$ \cite[Theorem~11]{A} and it is called a regular $\pr p$-prime extension of $N$ in $M$.

A filtration of submodules $\mathcal{F} : N = M_0 \overset{{\pr p}_1}\subset M_1 \subset \cdots \overset{{\pr p}_n}\subset M_n = M$ is called a regular prime extension (RPE) filtration of $M$ over $N$ if each $M_i$ is a regular ${\pr p}_i$-prime extension of $M_{i-1}$ in $M$, $1 \leq i \leq n$. RPE filtrations are defined and studied in \cite{A}. Also, RPE filtrations are weak prime decompositions defined in \cite{Dress}.

The following result shows that $\ass{M/N}$ is precisely the set of prime ideals occurring in any RPE filtration of $M$ over $N$.
\begin{lemma}\cite[Proposition~14]{A}\label{lemma*}
Let $N$ be a proper submodule of $M$. If $ N = M_0 \overset{{\p p}_1}\subset M_1 \subset \cdots \subset M_{n-1} \overset{{\p p}_n}\subset M_n = M $ is an RPE filtration of $M$ over $N$, then $\ass{M/M_{i-1}} = \{{\p p}_{i},\ldots,{\p p}_n\}$ for $1 \leq i \leq n$. In particular, $\ass{M/N} = \{{\p p}_1,\ldots,{\p p}_n\}$.
\end{lemma}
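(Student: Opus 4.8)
The plan is to prove both inclusions of $\ass{M/M_{i-1}}=\{\p p_i,\ldots,\p p_n\}$ for every $i$, and then to obtain the last sentence as the special case $i=1$. Two features of an RPE filtration will be used repeatedly. First, since $M_{i-1}$ is a $\p p_i$-prime submodule of $M_i$, the fact recalled in the introduction gives $\ass{M_i/M_{i-1}}=\{\p p_i\}$. Second, since $M_i$ is a regular $\p p_i$-prime extension of $M_{i-1}$ in $M$, we have $M_i=(M_{i-1}:\p p_i)$; thus $M_i/M_{i-1}=(0:_{M/M_{i-1}}\p p_i)$ is precisely the submodule of $M/M_{i-1}$ annihilated by $\p p_i$, and moreover $\p p_i$ is a maximal element of $\ass{M/M_{i-1}}$.

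The inclusion $\ass{M/M_{i-1}}\subseteq\{\p p_i,\ldots,\p p_n\}$ is the easy half: applying the standard containment $\ass{E}\subseteq\ass{E'}\cup\ass{E''}$ for a short exact sequence $0\to E'\to E\to E''\to 0$ to $0\to M_j/M_{j-1}\to M/M_{j-1}\to M/M_j\to 0$ gives $\ass{M/M_{j-1}}\subseteq\{\p p_j\}\cup\ass{M/M_j}$, and iterating this for $j=i,i+1,\ldots,n$ together with $\ass{M/M_n}=\ass{0}=\emptyset$ yields the claim.

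For the reverse inclusion, the step I expect to be the main obstacle is to show that quotienting by a single step of the filtration destroys no associated prime, that is, $\ass{M/M_k}\subseteq\ass{M/M_{k-1}}$ for each $k$. Granting this, for $j\ge i$ one gets $\p p_j\in\ass{M_j/M_{j-1}}\subseteq\ass{M/M_{j-1}}\subseteq\ass{M/M_{j-2}}\subseteq\cdots\subseteq\ass{M/M_{i-1}}$, whence $\{\p p_i,\ldots,\p p_n\}\subseteq\ass{M/M_{i-1}}$. To prove the step, set $A=M/M_{k-1}$, $\p p=\p p_k$, and $B=(0:_{A}\p p)=M_k/M_{k-1}$, so that $M/M_k=A/B$. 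Since $\p p B=0$, every prime in $\ass{B}$ contains $\p p$; because also $\ass{B}\subseteq\ass{A}$ and $\p p$ is maximal in $\ass{A}$, this forces $\ass{B}=\{\p p\}$ (note $B\ne 0$ since $\p p\in\ass{A}$), so the zero-divisors of $R$ on $B$ are exactly the elements of $\p p$, and in particular every $a\in R\setminus\p p$ is a non-zero-divisor on $B$. Now take $\p q\in\ass{A/B}$; the goal is $\p q\in\ass{A}$. If $\p q=\p p$ this holds by hypothesis. If $\p p\not\subseteq\p q$, pick $u\in\p p\setminus\p q$: then $u$ annihilates $B$ and is a unit in $R_{\p q}$, so $B_{\p q}=0$, hence $(A/B)_{\p q}\cong A_{\p q}$ and therefore $\p q\in\ass{A}$. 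Finally, the case $\p p\subsetneq\p q$ cannot occur: choose $y\in A\setminus B$ with $\ann{y+B}=\p q$ and choose $a\in\p q\setminus\p p$; then $\p p y\subseteq\p q y\subseteq B$ and $a(\p p y)=\p p(ay)\subseteq\p p B=0$, so, $a$ being a non-zero-divisor on $B$, we get $\p p y=0$, i.e.\ $y\in B$ --- a contradiction.

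Combining the two inclusions gives $\ass{M/M_{i-1}}=\{\p p_i,\ldots,\p p_n\}$ for $1\le i\le n$, and the case $i=1$ is the asserted $\ass{M/N}=\{\p p_1,\ldots,\p p_n\}$. The one genuinely delicate point is the last case above, excluding $\p p\subsetneq\p q$; that is precisely where regularity --- the maximality of each $\p p_i$ in $\ass{M/M_{i-1}}$ --- is indispensable, since for a prime filtration that is not regular the primes $\p p_i$ need not all lie in $\ass{M/M_{i-1}}$.
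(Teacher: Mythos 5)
Your argument is correct. Note that the paper itself gives no proof of this lemma --- it is quoted from \cite[Proposition~14]{A} --- so there is no in-text argument to compare against; judged on its own, your proposal is complete. The easy inclusion via the short exact sequences $0\to M_j/M_{j-1}\to M/M_{j-1}\to M/M_j\to 0$ is standard, and your reduction of the hard inclusion to the single step $\ass{M/M_k}\subseteq\ass{M/M_{k-1}}$ is the right move. The delicate point is exactly where you locate it: with $A=M/M_{k-1}$, $B=(0:_A\p p_k)=M_k/M_{k-1}$, the case $\p p_k\subsetneq\p q$ for $\p q\in\ass{A/B}$ must be excluded, and your argument (pick $a\in\p q\setminus\p p_k$, observe $a(\p p_k y)\subseteq\p p_k B=0$ with $\p p_k y\subseteq B$ and $a$ a non-zero-divisor on $B$, forcing $y\in B$) uses both ingredients of regularity --- that $M_k=(M_{k-1}:\p p_k)$ and that $\p p_k$ is maximal in $\ass{M/M_{k-1}}$ --- and is sound. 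Your closing remark that the inclusion $\{\p p_i,\dots,\p p_n\}\subseteq\ass{M/M_{i-1}}$ genuinely fails for non-regular prime filtrations is also accurate and correctly identifies why regularity is not a removable hypothesis.
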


The following lemma characterizes the submodules occurring in an RPE filtration.
\begin{lemma}\cite[Lemma~3.1]{B}\label{lemma1}
Let $N$ be a proper submodule of $M$. If $N = M_0 \overset{{\p p}_1}\subset M_1 \subset \cdots \subset M_{i-1}\overset{{\p p}_i}\subset M_i \overset{{\p p}_{i+1}}\subset M_{i+1}\subset \cdots \overset{{\p p}_n}\subset M_n = M $ is an RPE filtration of $M$ over $N$, then $M_i = \{ x \in M \mid \pp i x \subseteq N \}$ for $ 1 \leq i \leq n$. In other words, $M_i = (N : \pp i)$ for $ 1 \leq i \leq n$.
\end{lemma}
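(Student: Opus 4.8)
The plan is to prove the equality $M_i=(N:\pp i)$ for $1\le i\le n$ by induction on $i$, where for a submodule $L\subseteq M$ and an ideal $\mathfrak a$ of $R$ I write $(L:\mathfrak a)=\{x\in M\mid \mathfrak a x\subseteq L\}$. The only structural input I would use is the meaning of the word ``regular'' in the hypothesis: because $\mathcal F$ is an RPE filtration, each $M_i$ is a \emph{regular} ${\p p}_i$-prime extension of $M_{i-1}$ in $M$, and by the description of regular prime extensions recalled in the introduction (namely \cite[Theorem~11]{A}) this means precisely that ${\p p}_i$ is a maximal element of $\ass{M/M_{i-1}}$ and
\[
M_i=(M_{i-1}:{\p p}_i),
\]
the colon being computed inside $M$. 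Consistency of the maximality clause with the filtration is guaranteed by Lemma~\ref{lemma*}, which gives $\ass{M/M_{i-1}}=\{{\p p}_i,\ldots,{\p p}_n\}$, but for the argument below only the displayed closed form of $M_i$ is needed.

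For the base case $i=1$ one has $M_0=N$, so the displayed formula immediately reads $M_1=(M_0:{\p p}_1)=(N:{\p p}_1)$, which is the asserted equality. For the inductive step I would assume $M_{i-1}=(N:\pp{i-1})$ and combine it with $M_i=(M_{i-1}:{\p p}_i)$ through the elementary colon identity
\[
\bigl((L:\mathfrak b):\mathfrak a\bigr)=(L:\mathfrak a\mathfrak b),
\]
valid for any submodule $L\subseteq M$ and ideals $\mathfrak a,\mathfrak b$ of $R$: the condition $\mathfrak a x\subseteq(L:\mathfrak b)$ says $bax\in L$ for all $a\in\mathfrak a$, $b\in\mathfrak b$, which, since $L$ is a submodule and $\mathfrak a\mathfrak b$ is generated by the products $ab$, is equivalent to $\mathfrak a\mathfrak b x\subseteq L$. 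Applying this with $L=N$, $\mathfrak b=\pp{i-1}$ and $\mathfrak a={\p p}_i$ gives
\[
M_i=(M_{i-1}:{\p p}_i)=\bigl((N:\pp{i-1}):{\p p}_i\bigr)=(N:\pp{i-1}\,{\p p}_i)=(N:\pp i),
\]
which closes the induction. The ``in other words'' clause is then just the unravelled definition $(N:\pp i)=\{x\in M\mid \pp i\,x\subseteq N\}$.

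Since the whole argument collapses to a short induction, I do not expect a genuine obstacle. The one place that deserves care is the very first move --- replacing ``$M_i$ is the regular ${\p p}_i$-prime extension of $M_{i-1}$ in $M$'' by the explicit formula $M_i=(M_{i-1}:{\p p}_i)$ --- so the real content already sits in that earlier structural result; beyond that, one only needs to check the colon identity at the level of ideal products rather than single ring elements, which is routine.
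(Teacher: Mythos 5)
Your argument is correct: reading off from the definition of a regular prime extension that $M_i=(M_{i-1}:{\p p}_i)$ with the colon taken in $M$, and then inducting on $i$ via the identity $\bigl((N:\mathfrak b):\mathfrak a\bigr)=(N:\mathfrak a\mathfrak b)$, is exactly the intended route to this lemma. The paper itself only cites the statement from \cite{B} without reproducing a proof, so there is nothing internal to compare against, but your derivation is the standard one and has no gaps.
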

The occurrences of two prime ideals in an RPE filtration can be interchanged provided they satisfy the following condition.

\begin{lemma}\cite[Lemma~20]{A}\label{interchange}
Let $N$ be a proper submodule of $M$ and $N = M_0 \subset \cdots \subset M_{i-1}\overset{{\p p}_i}\subset M_i \overset{{\p p}_{i+1}}\subset M_{i+1}\subset \cdots \subset M_n = M $ be an RPE filtration of $M$ over $N$. If ${\p p}_{i+1} \not\subseteq {\p p}_{i}$, then there exists a submodule $K_i$ of $M$ such that $N = M_0 \subset \cdots \subset M_{i-1}\overset{{\p p}_{i+1}}\subset K_i \overset{{\p p}_{i}}\subset M_{i+1}\subset \cdots \subset M_n =M$ is an RPE filtration of $M$ over $N$.
\end{lemma}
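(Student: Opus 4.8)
The plan is to take $K_i := (M_{i-1} : \p p_{i+1}) = \{\, x \in M \mid \p p_{i+1}\,x \subseteq M_{i-1}\,\}$ and show it splices in as claimed; the first task is to see that $K_i$ is the regular $\p p_{i+1}$-prime extension of $M_{i-1}$ in $M$. By Lemma~\ref{lemma*} applied to the given filtration, $\ass{M/M_{i-1}} = \{\p p_i, \p p_{i+1}, \dots, \p p_n\}$ and $\ass{M/M_i} = \{\p p_{i+1}, \dots, \p p_n\}$, and since $M_i \overset{\p p_{i+1}}\subset M_{i+1}$ is a regular prime extension, $\p p_{i+1}$ is a maximal element of $\ass{M/M_i}$; hence no member of $\{\p p_{i+1}, \dots, \p p_n\}$ strictly contains $\p p_{i+1}$, and the hypothesis $\p p_{i+1} \not\subseteq \p p_i$ disposes of the remaining member $\p p_i$. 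Thus $\p p_{i+1}$ is maximal in $\ass{M/M_{i-1}}$, so by \cite[Theorem~11]{A}, $K_i$ is the unique maximal $\p p_{i+1}$-prime extension of $M_{i-1}$ in $M$, i.e. $M_{i-1} \overset{\p p_{i+1}}\subset K_i$; moreover $M_{i-1} \subsetneq K_i$ because $\p p_{i+1} \in \ass{M/M_{i-1}}$, and $K_i \subsetneq M$ because $\p p_i \in \ass{M/M_{i-1}}$ with $\p p_i \ne \p p_{i+1}$. This use of $\p p_{i+1} \not\subseteq \p p_i$ is the crux of the argument.

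Next I would pin down $\ass{M/K_i}$ in order to legitimize the second new step. Since $M$ is Noetherian and $K_i$ is a proper submodule, $M_{i-1} \overset{\p p_{i+1}}\subset K_i$ extends to an RPE filtration $M_{i-1} \overset{\p p_{i+1}}\subset K_i \subset \cdots \subset M$ of $M$ over $M_{i-1}$, and Lemma~\ref{lemma*} then gives $\ass{M/K_i} = \ass{M/M_{i-1}} \setminus \{\p p_{i+1}\}$. In particular $\p p_i \in \ass{M/K_i} \subseteq \ass{M/M_{i-1}}$ (using $\p p_i \ne \p p_{i+1}$), and since $\p p_i$ is maximal in $\ass{M/M_{i-1}}$ (as $M_{i-1} \overset{\p p_i}\subset M_i$ is regular) it is maximal in $\ass{M/K_i}$. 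Hence the regular $\p p_i$-prime extension of $K_i$ in $M$ exists and equals $(K_i : \p p_i)$.

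It remains to compute $(K_i : \p p_i)$ and assemble. By Lemma~\ref{lemma1}, $M_i = (M_{i-1} : \p p_i)$ and $M_{i+1} = (M_i : \p p_{i+1})$, so using the identity $((L : \p a) : \p b) = (L : \p a\p b)$ (for a submodule $L$ and ideals $\p a, \p b$),
\[
(K_i : \p p_i) = ((M_{i-1} : \p p_{i+1}) : \p p_i) = (M_{i-1} : \p p_i\p p_{i+1}) = ((M_{i-1} : \p p_i) : \p p_{i+1}) = (M_i : \p p_{i+1}) = M_{i+1}.
\]
Therefore $M_{i+1}$ is the regular $\p p_i$-prime extension of $K_i$ in $M$, i.e. $K_i \overset{\p p_i}\subset M_{i+1}$, with $K_i \subsetneq M_{i+1}$ since $\p p_i \in \ass{M/K_i}$. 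The remaining steps $M_0 \subset \cdots \subset M_{i-1}$ and $M_{i+1} \subset \cdots \subset M_n$ are untouched and hence still regular prime extensions, so $N = M_0 \subset \cdots \subset M_{i-1} \overset{\p p_{i+1}}\subset K_i \overset{\p p_i}\subset M_{i+1} \subset \cdots \subset M_n = M$ is an RPE filtration of $M$ over $N$. The one delicate point is the $\ass$-bookkeeping certifying the two new steps as \emph{regular} prime extensions; the colon computation above is then routine.
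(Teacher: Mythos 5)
The paper does not actually prove this lemma: it is imported verbatim from \cite[Lemma~20]{A}, so there is no in-text argument of the authors' to compare yours against. Taken on its own, your reconstruction is correct and uses exactly the toolkit the paper restates: you identify $K_i=(M_{i-1}:\p p_{i+1})$, use Lemma~\ref{lemma*} together with the hypothesis $\p p_{i+1}\not\subseteq\p p_i$ to see that $\p p_{i+1}$ is maximal in $\ass{M/M_{i-1}}$ so that \cite[Theorem~11]{A} applies, and then close with the colon computation $(K_i:\p p_i)=(M_{i-1}:\p p_i\p p_{i+1})=M_{i+1}$ via Lemma~\ref{lemma1}. One sentence is overstated: the equality $\ass{M/K_i}=\ass{M/M_{i-1}}\setminus\{\p p_{i+1}\}$ need not hold, because $\p p_{i+1}$ may occur again among $\p p_{i+2},\dots,\p p_n$ and then it remains associated to $M/K_i$; what Lemma~\ref{lemma*} applied to your extended filtration actually yields is only $\ass{M/M_{i-1}}\setminus\{\p p_{i+1}\}\subseteq\ass{M/K_i}\subseteq\ass{M/M_{i-1}}$. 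Since these two inclusions are all you use in the sequel (they already give $\p p_i\in\ass{M/K_i}$, using $\p p_i\neq\p p_{i+1}$, and its maximality there), the slip does not damage the argument, but the stated equality should be replaced by the inclusions.
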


\begin{remark}\label{rem_interchange}
So for every reordering ${\p p}_1', \dots , {\p p}_n' $ of $\ppdots n$ with ${\p p}_i' \not \subset {\p p}_j'$ for $i<j$, we can get an RPE filtration $$N \overset{{\p p}_1'}\subset M_1' \overset{{\p p}_2'}\subset M_2' \subset \cdots \subset M_{n-1}' \overset{{\p p}_n'}\subset M_n' = M.$$ In particular, if ${\p p}_i$ is minimal among $\{ \ppdots n \}$, then we can have an RPE filtration
\begin{multline*}
N=M_0 \overset{{\p p}_1}\subset M_1 \subset \cdots \overset{{\p p}_{i-1}} \subset M_{i-1}\overset{{\p p}_{i+1}}\subset K_i \overset{{\p p}_{i+2}} \subset K_{i+1} \subset \cdots \overset{{\p p}_n} \subset K_{n-1} \overset{{\p p}_i}\subset M
\end{multline*}
\cite[Remark~2.5]{B}. In general, if ${\p p}_i$ occurs $r$ times in an RPE filtration, then there exists an RPE filtration
\begin{multline*}
N = M_0 \overset{{\p p}_1}\subset M_1 \subset \cdots \overset{{\p p}_{i-1}} \subset M_{i-1} \overset{{\p p}_i}\subset M_i \overset{{\p p}_i} \subset M_{i+1} \subset \cdots \\ \overset{{\p p}_i} \subset M_{i+r-1} \overset{{\p p}_{i+1}}\subset M_{i+r} \overset{{\p p}_{i+2}} \subset \cdots \subset M_n = M
\end{multline*}
where ${\p p}_j \not\subseteq {\p p}_i$ for $j<i$.
\end{remark}

It is proved that in any RPE filtration of $M$ over $N$, the number of occurrences of each prime ideal is unique \cite[Theorem~22]{A}. Hence, if $N = M_0 \overset{{\pr p}_1}\subset M_1 \subset \cdots \overset{{\pr p}_n}\subset M_n = M $ is an RPE filtration, then the product $\prp n$ is uniquely defined for $N$ in $M$ and it is called the generalized prime ideal factorization of $N$ in $M$. We write ${\mathcal{P}}_M(N)= \prp n$ and in short, we call it the prime factorization of $N$ in $M$. Generalized prime ideal factorization of submodules is defined and studied in \cite{E}.

In \cite{E} it was observed that in a finitely generated module over a Noetherian ring, distinct submodules may have the same prime factorization. For example, in $k[x,y]$, the ideals $(x^2,y)$ and $(x,y^2)$ have the same prime factorization $(x,y)^2$ \cite[Example~2.5]{E}. We see that their intersection $(x^2,xy,y^2)$ also has the prime factorization $(x,y)^2$. In this article we show that this need not always be true. For submodules $N$ and $K$ of an $R$-module $M$ with ${\mathcal{P}}_{M}(N) = {\mathcal{P}}_{M}(K)$, we find conditions for ${\mathcal{P}}_{M}(N \cap K) = {\mathcal{P}}_{M}(N)$. We also compare the prime factorizations of a submodule $N$ in $M$ and the ideal $\ann{M/N}$ in $R$. We show that the product ${\mathcal{P}}_{M}(N)$ is a multiple of the product ${\mathcal{P}}_{R}(\ann{M/N})$ and give a sufficient condition for ${\mathcal{P}}_{M}(N) = {\mathcal{P}}_{R}(\ann{M/N})$.

We use the following lemmas.

\begin{lemma}\cite[Lemma~2.8]{B}\label{lemma}
If $N \overset{\p p} \subset K$ is a regular $\p p$-prime extension in $M$, then for any submodule $L$ of $M$, $N \cap L \overset{\p p} \subset K \cap L$ is a regular $\p p$-prime extension in $L$ when $N \cap L \neq K \cap L$.
\end{lemma}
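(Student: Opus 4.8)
The plan is to verify directly the two conditions required for $N\cap L\overset{\p p}\subset K\cap L$ to be a regular $\p p$-prime extension in $L$: first, that $\p p$ is a maximal element of $\assr{L/(N\cap L)}$, so that the notion of a regular $\p p$-prime extension of $N\cap L$ in $L$ even applies; and second, that $K\cap L$ equals the colon submodule $\{x\in L\mid \p p x\subseteq N\cap L\}$, which by \cite[Theorem~11]{A} is then precisely that regular extension (and in particular forces $N\cap L$ to be $\p p$-prime in $K\cap L$). I will use that the hypothesis on $N\overset{\p p}\subset K$ unpacks to $K=(N:\p p)$, $(N:K)=\p p$, and $\p p$ maximal in $\assr{M/N}$.

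The second condition is a one-line colon manipulation: if $x\in L$ with $\p p x\subseteq N\cap L$, then $x\in(N:\p p)=K$, so $x\in K\cap L$; conversely if $x\in K\cap L$ then $\p p x\subseteq N$ and $\p p x\subseteq L$ (as $L$ is a submodule), hence $\p p x\subseteq N\cap L$. The same computation yields $\p p(K\cap L)\subseteq N\cap L$, i.e. $\p p\subseteq(N\cap L:K\cap L)$; combined with the reverse inclusion (if $a(K\cap L)\subseteq N\cap L$, pick $y\in(K\cap L)\setminus N$, which exists since $K\cap L\not\subseteq N$, so $ay\in N$, $y\in K\setminus N$, and therefore $a\in(N:K)=\p p$ because $N$ is $\p p$-prime in $K$) this even gives $(N\cap L:K\cap L)=\p p$ without appealing to \cite[Theorem~11]{A}.

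The first condition is the heart of the matter. The natural map $L\to M/N$ has kernel $N\cap L$, so $L/(N\cap L)$ embeds into $M/N$ and therefore $\assr{L/(N\cap L)}\subseteq\assr{M/N}$; since $\p p$ is maximal in the larger set, it is enough to show $\p p\in\assr{L/(N\cap L)}$. This is where the hypothesis $N\cap L\neq K\cap L$ is used: as $N\cap L\subseteq K\cap L$, choose $x\in(K\cap L)\setminus(N\cap L)$, so $\bar x:=x+(N\cap L)$ is a nonzero element of $L/(N\cap L)$ killed by $\p p$. Picking any $\p q\in\assr{R\bar x}\subseteq\assr{L/(N\cap L)}$ gives $\p p\subseteq\ann{\bar x}\subseteq\p q\in\assr{M/N}$, so maximality of $\p p$ forces $\p q=\p p$; hence $\p p\in\assr{L/(N\cap L)}$ and, a fortiori, is maximal there.

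I expect the third step to be the only real obstacle: one must be careful that the nonzero element witnessing $N\cap L\subsetneq K\cap L$ really produces $\p p$ \emph{itself}, and not merely some prime containing it, as an associated prime of $L/(N\cap L)$ --- and this is exactly the point where both the maximality of $\p p$ in $\assr{M/N}$ and the containment $\assr{L/(N\cap L)}\subseteq\assr{M/N}$ are indispensable. Once these two conditions are in place, \cite[Theorem~11]{A} identifies $\{x\in L\mid \p p x\subseteq N\cap L\}=K\cap L$ as the regular $\p p$-prime extension of $N\cap L$ in $L$, completing the proof.
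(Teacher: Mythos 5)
Your proof is correct, and it is essentially the argument the paper uses for the closely analogous Lemma \ref{intersectionresult} (the paper does not reprove the present lemma, which is imported from \cite[Lemma~2.8]{B}): compute $(N\cap L : K\cap L)=\p p$ and primality using a witness in $(K\cap L)\setminus N$, identify $K\cap L$ with $\{x\in L\mid \p p x\subseteq N\cap L\}$ to obtain maximality, and verify that $\p p$ remains a maximal element of $\ass{L/(N\cap L)}$ via the inclusion $\ass{L/(N\cap L)}\subseteq\ass{M/N}$. Your treatment of the one delicate point --- that the element witnessing $N\cap L\neq K\cap L$ produces $\p p$ itself, and not merely some larger prime, as an associated prime of $L/(N\cap L)$, which is exactly where the maximality of $\p p$ in $\ass{M/N}$ enters --- is correct.
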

Hence intersecting a regular prime extension with a submodule gives a regular prime extension whenever equality does not occur.

\begin{lemma}\label{intersectionresult}
Let $N_1 \overset{\p p} \subset N_2$ and $K_1 \overset{\p p} \subset K_2$ be regular prime extensions in $M$. If $N_1 \cap K_1 \neq N_2 \cap K_2$, then $N_1 \cap K_1 \overset{\p p}\subset N_2 \cap K_2$ is a regular prime extension in $M$.
\end{lemma}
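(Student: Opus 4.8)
The plan is to verify, for $A := N_1 \cap K_1$ and $B := N_2 \cap K_2$, the two properties that characterise a regular $\p p$-prime extension: that $\p p$ is a maximal element of $\ass{M/A}$ and that $B = (A : \p p)$. By \cite[Theorem~11]{A} these together say that $B$ is \emph{the} regular $\p p$-prime extension of $A$ in $M$, and since $A \neq B$ (while $A \subseteq B$), this is precisely the conclusion $A \overset{\p p}\subset B$ is a regular $\p p$-prime extension in $M$. Throughout I would use that, because $N_1 \overset{\p p}\subset N_2$ and $K_1 \overset{\p p}\subset K_2$ are regular in $M$, we have $N_2 = (N_1 : \p p)$, $K_2 = (K_1 : \p p)$, and $\p p$ is a maximal element of each of $\ass{M/N_1}$ and $\ass{M/K_1}$.

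First, the identity $B = (A : \p p)$ is a short computation: $(A : \p p) = \{x \in M : \p p x \subseteq N_1 \cap K_1\} = (N_1 : \p p) \cap (K_1 : \p p) = N_2 \cap K_2 = B$.

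Next I would show $\ass{B/A} = \{\p p\}$ by passing through the intermediate submodule $C := N_1 \cap K_2$, so that $A \subseteq C \subseteq B$. Applying Lemma~\ref{lemma} to the regular extension $K_1 \overset{\p p}\subset K_2$ and the submodule $N_1$ gives, when $A \neq C$, that $A \overset{\p p}\subset C$ is a regular $\p p$-prime extension in $N_1$, whence $\ass{C/A} = \{\p p\}$; when $A = C$ the quotient is zero. Either way $\ass{C/A} \subseteq \{\p p\}$, and symmetrically Lemma~\ref{lemma} applied to $N_1 \overset{\p p}\subset N_2$ and the submodule $K_2$ yields $\ass{B/C} \subseteq \{\p p\}$. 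The exact sequence $0 \to C/A \to B/A \to B/C \to 0$ then forces $\ass{B/A} \subseteq \{\p p\}$, and since $A \neq B$ the module $B/A$ is nonzero, so $\ass{B/A} = \{\p p\}$; in particular $\p p \in \ass{M/A}$. For maximality, the kernel of the map $M/A \to M/N_1 \oplus M/K_1$ is $(N_1 \cap K_1)/A = 0$, so $\ass{M/A} \subseteq \ass{M/N_1} \cup \ass{M/K_1}$; any $\p q \in \ass{M/A}$ with $\p q \supseteq \p p$ lies in one of these two sets, in each of which $\p p$ is maximal, and hence $\p q = \p p$.

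Having shown $\p p$ is maximal in $\ass{M/A}$ and $B = (A : \p p)$, an appeal to \cite[Theorem~11]{A} finishes the argument. I expect the only real obstacle to be the membership $\p p \in \ass{M/A}$: one cannot shortcut it by claiming $A$ already equals $N_1 \cap K_2$ or $N_2 \cap K_1$ — for instance in $k[x,y]$ with $N_1 = (x^2,y)$, $K_1 = (x,y^2)$, $N_2 = K_2 = (x,y)$ one has $A = (x,y)^2 \subsetneq (x^2,y) \subsetneq (x,y) = B$, two genuine extension steps — so the collapse to a single regular extension has to be detected through the exact sequence above together with Lemma~\ref{lemma}. (Alternatively, $A \overset{\p p}\subset B$ can be checked directly: if $ax \in A$ with $x \in B$ and $a \notin \p p$, then $ax \in N_1$, $x \in N_2$ force $x \in N_1$, and $ax \in K_1$, $x \in K_2$ force $x \in K_1$, so $x \in A$; feeding this to a witness $x \in B \setminus A$ gives $(A:B) = \p p$.)
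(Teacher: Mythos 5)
Your proof is correct, and its overall skeleton matches the paper's: both establish that $N_2\cap K_2=(N_1\cap K_1:\p p)$, that $\ass{(N_2\cap K_2)/(N_1\cap K_1)}=\{\p p\}$, and that $\p p$ is a maximal element of $\ass{M/(N_1\cap K_1)}$, and then conclude via \cite[Theorem~11]{A}. You execute two of these steps by genuinely different means. For $\ass{(N_2\cap K_2)/(N_1\cap K_1)}=\{\p p\}$ the paper argues directly and elementarily: given $ax\in N_1\cap K_1$ with $x\in (N_2\cap K_2)\setminus (N_1\cap K_1)$, one may assume $x\notin N_1$ and read off $a\in (N_1:N_2)=\p p$ (this is exactly your parenthetical ``alternative'' check). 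Your main route instead interposes $C=N_1\cap K_2$ between $A=N_1\cap K_1$ and $B=N_2\cap K_2$, applies Lemma~\ref{lemma} twice, and uses $\ass{B/A}\subseteq\ass{C/A}\cup\ass{B/C}$ from the short exact sequence; this is more machinery, but it correctly accounts for the point you flag, namely that $A\subset C\subset B$ can consist of two genuine inclusions. For the maximality of $\p p$ in $\ass{M/A}$ the paper stays internal to what it has just proved: any $\p q\supseteq\p p$ in $\ass{M/A}$ has a witness $x$ with $\p p x\subseteq A$, hence $x\in B$ and $\p q\in\ass{B/A}=\{\p p\}$. You instead use the embedding of $M/A$ into $M/N_1\oplus M/K_1$ together with the maximality of $\p p$ in each of $\ass{M/N_1}$ and $\ass{M/K_1}$, which leans on the regularity hypothesis rather than on the structure of $B/A$; it is a clean, slightly more structural alternative. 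Both arguments are sound; the paper's is the more self-contained and elementary.
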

\begin{proof}
We have $\p p \subseteq (N_1 \cap K_1 : N_2 \cap K_2)$ since $\p p N_2 \subseteq N_1$ and $\p p K_2 \subseteq K_1$. Now let $a \in (N_1 \cap K_1 : N_2 \cap K_2)$ and $x \in N_2 \cap K_2\setminus N_1 \cap K_1$. Then $ax \in N_1 \cap K_1$. Without loss of generality, we assume $x \notin N_1$. Then since $N_1 \overset{\p p} \subset N_2$ is a prime extension, $ax \in N_1$ implies $a \in (N_1 : N_2) = \p p$. Hence $(N_1 \cap K_1 : N_2 \cap K_2) = \p p$.

Let $ax \in N_1 \cap K_1$ for some $x \in N_2 \cap K_2$, $a \in R$ such that $x \notin N_1 \cap K_1$. Without loss of generality, assume $x \notin N_1$. Then $ax \in N_1 \cap K_1 \subseteq N_1$ implies $a \in (N_1 : N_2) = \p p$. Therefore, $N_2 \cap K_2$ is a $\p p$-prime extension of $N_1 \cap K_1$, and $\{ \p p \} = \ass{\frac{N_2 \cap K_2}{N_1 \cap K_1}} \subseteq \ass{\frac{M}{N_1 \cap K_1}}$.

Let $L$ be any $\p p$-prime extension of $N_1 \cap K_1$ in $M$ and let $x \in L$. Then $\p p x \subseteq N_1 \cap K_1$, i.e., $x \in (N_1 : \p p) \cap (K_1 : \p p) = N_2 \cap K_2$ [Lemma \ref{lemma1}], and hence $L \subseteq N_2 \cap K_2$. So $N_2 \cap K_2$ is a maximal $\p p$-prime extension of $N_1 \cap K_1$ in $M$.

Suppose $\p q \in \ass{M/{N_1 \cap K_1}}$ and $\p q \supseteq \p p$. Then $\p q = (N_1 \cap K_1 : x)$ for some $x \in M$. Since $\p p \subseteq \p q$, $\p p x \subseteq N_1 \cap K_1$. That is, $x \in (N_1 : \p p) \cap (K_1 : \p p) = N_2 \cap K_2$. This implies, $\p q \in \ass{\frac{N_2 \cap K_2}{N_1 \cap K_1}} = \{ \p p \}$. That is, $\p q = \p p$. Therefore, $\p p$ is a maximal element in $\ass{M/{N_1 \cap K_1}}$, and hence $N_2 \cap K_2$ is a regular $\p p$-prime extension of $N_1 \cap K_1$ in $M$.
\end{proof}
The next lemma gives a condition satisfied by the prime factorization of a submodule.
\begin{lemma}\label{moduleresult}
Let $N$ be a submodule of $M$ having ${\mathcal{P}}_M(N) = \pp n$. Then $\pp {i-1}{\p p}_{i+1} \cdots {\p p}_n M \not \subseteq N$ whenever ${\p p}_i \not \supset {\p p}_j$ for every $1 \leq j \leq n$.
\end{lemma}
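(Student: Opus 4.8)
The plan is to realise ${\mathcal{P}}_M(N)$ by a concrete RPE filtration, push the $i$-th prime into the last slot using the reordering of Remark~\ref{rem_interchange}, and then recognise the penultimate term of that filtration as a colon module via Lemma~\ref{lemma1}.

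To begin, I would fix an RPE filtration $N = M_0 \overset{\mathfrak{p}_1}{\subset} M_1 \subset \cdots \overset{\mathfrak{p}_n}{\subset} M_n = M$ witnessing ${\mathcal{P}}_M(N) = \mathfrak{p}_1\cdots\mathfrak{p}_n$. The hypothesis that $\mathfrak{p}_i \not\supset \mathfrak{p}_j$ for every $j$ is exactly the statement that $\mathfrak{p}_i$ is a minimal element of $\{\mathfrak{p}_1,\dots,\mathfrak{p}_n\}$, so Remark~\ref{rem_interchange} produces an RPE filtration of $M$ over $N$ in which the $i$-th prime $\mathfrak{p}_i$ has been moved to the end:
$$N = L_0 \overset{\mathfrak{q}_1}{\subset} L_1 \subset \cdots \overset{\mathfrak{q}_{n-1}}{\subset} L_{n-1} \overset{\mathfrak{p}_i}{\subset} L_n = M,$$
where $(\mathfrak{q}_1,\dots,\mathfrak{q}_{n-1})$ is the list $(\mathfrak{p}_1,\dots,\mathfrak{p}_{i-1},\mathfrak{p}_{i+1},\dots,\mathfrak{p}_n)$; in particular $\mathfrak{q}_1\cdots\mathfrak{q}_{n-1} = \mathfrak{p}_1\cdots\mathfrak{p}_{i-1}\mathfrak{p}_{i+1}\cdots\mathfrak{p}_n$ (the empty product $R$ when $n=1$).

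Next, I would apply Lemma~\ref{lemma1} to this new filtration to obtain $L_{n-1} = (N : \mathfrak{q}_1\cdots\mathfrak{q}_{n-1}) = (N : \mathfrak{p}_1\cdots\mathfrak{p}_{i-1}\mathfrak{p}_{i+1}\cdots\mathfrak{p}_n)$. Because $L_{n-1}\overset{\mathfrak{p}_i}{\subset} L_n = M$ is a prime extension, $L_{n-1}$ is a proper submodule of $M$, so $(N : \mathfrak{p}_1\cdots\mathfrak{p}_{i-1}\mathfrak{p}_{i+1}\cdots\mathfrak{p}_n)\neq M$. Unwinding the definition of the colon module, there is some $x\in M$ with $(\mathfrak{p}_1\cdots\mathfrak{p}_{i-1}\mathfrak{p}_{i+1}\cdots\mathfrak{p}_n)x\not\subseteq N$, and hence $\mathfrak{p}_1\cdots\mathfrak{p}_{i-1}\mathfrak{p}_{i+1}\cdots\mathfrak{p}_n M\not\subseteq N$, which is the assertion.

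I do not expect a genuine obstacle: the weight of the argument is carried entirely by Remark~\ref{rem_interchange} (reordering an RPE filtration so that a minimal prime sits last) together with Lemma~\ref{lemma1} (terms of an RPE filtration are colon modules), and the rest is bookkeeping. The only point needing a word of care is the degenerate case $n=1$, where the relevant product is empty and the statement collapses to $M\not\subseteq N$, which holds because $N$ is a proper submodule of $M$.
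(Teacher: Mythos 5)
Your proposal is correct and follows essentially the same route as the paper's own proof: reorder the RPE filtration via Remark~\ref{rem_interchange} so that the minimal prime $\mathfrak{p}_i$ occupies the last slot, identify the penultimate term as the colon module $(N : \mathfrak{p}_1\cdots\mathfrak{p}_{i-1}\mathfrak{p}_{i+1}\cdots\mathfrak{p}_n)$ by Lemma~\ref{lemma1}, and conclude from its properness in $M$. The only (harmless) addition is your explicit treatment of the case $n=1$.
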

\begin{proof}
There exists an RPE filtration $$N = N_0 \overset{{\p p}_1}\subset N_1 \subset \cdots \subset N_{i-1}\overset{{\p p}_i}\subset N_i \subset \cdots \subset N_{n-1} \overset{{\p p}_n}\subset N_n = M.$$ If for some $i$, ${\p p}_i \not \supset {\p p}_j$ for $j = 1, \dots, n$, by Remark \ref{rem_interchange} we can have an RPE filtration
$$N=N_0 \overset{{\p p}_1}\subset N_1 \subset \cdots \overset{{\p p}_{i-1}} \subset N_{i-1}\overset{{\p p}_{i+1}}\subset K_i \overset{{\p p}_{i+2}} \subset K_{i+1} \subset \cdots \overset{{\p p}_n} \subset K_{n-1} \overset{{\p p}_i}\subset M.$$
Then by Lemma \ref{lemma1}, $K_{n-1} = \{ x \in M \mid \pp {i-1}{\p p}_{i+1} \cdots {\p p}_n x \subseteq N\}$. So $\pp {i-1}{\p p}_{i+1} \cdots {\p p}_n M \subseteq N$ would imply $M\subseteq K_{n-1}$, which is a contradiction. Hence $\pp {i-1}{\p p}_{i+1} \cdots {\p p}_n M \not \subseteq N$.
\end{proof}

Lemma \ref{moduleresult} does not hold if ${\p p}_j \subset {\p p}_i$ for some $j$. Let $R = k[x,y,z] / (xy - z^2)$. Then ${\p p}_1 = (\overline{x},\overline{y},\overline{z})$ and ${\p p}_2 = (\overline{x},\overline{z})$ are prime ideals in $R$ and ${{\p p}_2}^2$ has the RPE filtration $${{\p p}_2}^2 = (\overline{x}^2, \overline{x}\overline{y}, \overline{x}\overline{z}) \overset{{\p p}_1} \subset (\overline{x}) \overset{{\p p}_2} \subset (\overline{x},\overline{z})\overset{{\p p}_2} \subset R.$$ If $M=R$ and $N={{\p p}_2}^2$, then we have ${\mathcal{P}}_M(N) = {\p p}_1 {{\p p}_2}^2$. But ${{\p p}_2}^2 M = N$.

\begin{remark}\label{eg1_prime}
For a prime ideal $\p p$ in $R$, we have ${\mathcal{P}}_{R}(\p p) = \p p$ \cite[Example~2.2]{E}. In fact, the only ideal in $R$ having $\p p$ as its generalized prime ideal factorization is $\p p$ itself. For suppose ${\mathcal{P}}_{R}(\p a) = \p p$ for an ideal $\p a$ in $R$. Then $\p a \overset{\p p} \subset R$ is an RPE filtration, which implies $(\p a : R) = \p p$. So $\p p \subseteq \p a \subseteq \sqrt{\p a}$. Also, $\ass{R/\p a} = \{ \p p \}$ and therefore, $\p a$ is $\p p$-primary, which gives $\sqrt{\p a} = \p p$. Hence we get $\p a = \p p$. 
\end{remark}

\section{Main Results}

\begin{proposition}\label{intersectionassoc}
Let $\p a, \p b$ be ideals in $R$. If ${\mathcal{P}}_{R}(\p a) = {\mathcal{P}}_{R}(\p b)$ and it is a product of at most two prime ideals, then ${\mathcal{P}}_{R}(\p a \cap \p b) = {\mathcal{P}}_{R}(\p a)$. 
\end{proposition}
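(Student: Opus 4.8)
The plan is to split on the number of primes appearing in the common factorization $\mathcal{P}_{R}(\mathfrak{a}) = \mathcal{P}_{R}(\mathfrak{b})$, which by hypothesis is either a single prime $\mathfrak{p}$ or a product $\mathfrak{p}_1\mathfrak{p}_2$ of two (possibly equal) primes; we may clearly assume $\mathfrak{a},\mathfrak{b}$ proper. The one-prime case is immediate from Remark~\ref{eg1_prime}: if $\mathcal{P}_{R}(\mathfrak{a}) = \mathfrak{p}$ then $\mathfrak{a} = \mathfrak{p}$, and likewise $\mathfrak{b} = \mathfrak{p}$, so $\mathfrak{a}\cap\mathfrak{b} = \mathfrak{p}$ and $\mathcal{P}_{R}(\mathfrak{a}\cap\mathfrak{b}) = \mathfrak{p}$. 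So the substance of the proof is the two-prime case, where I would exploit the rigid shape an RPE filtration of $R$ over a proper ideal is forced to take.

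The key observation is that in any RPE filtration $\mathfrak{c} = M_0 \overset{\mathfrak{q}}\subset M_1 \overset{\mathfrak{r}}\subset M_2 = R$ of $R$ over a proper ideal $\mathfrak{c}$, the penultimate term is the prime $\mathfrak{r}$ itself: since $M_1$ is an ideal, $(M_1 : R) = M_1$, while $M_1 \overset{\mathfrak{r}}\subset R$ forces $(M_1 : R) = \mathfrak{r}$, so $M_1 = \mathfrak{r}$. Applying this to $\mathfrak{a}$ and to $\mathfrak{b}$, and using Lemma~\ref{lemma*} to note that $\ass{R/\mathfrak{a}} = \ass{R/\mathfrak{b}} = \{\mathfrak{p}_1,\mathfrak{p}_2\}$, I would then invoke Remark~\ref{rem_interchange} to produce RPE filtrations
$$\mathfrak{a} \overset{\mathfrak{q}}\subset \mathfrak{r} \overset{\mathfrak{r}}\subset R \qquad\text{and}\qquad \mathfrak{b} \overset{\mathfrak{q}}\subset \mathfrak{r} \overset{\mathfrak{r}}\subset R$$
with the \emph{same} primes $\mathfrak{q},\mathfrak{r}$, where $\{\mathfrak{q},\mathfrak{r}\} = \{\mathfrak{p}_1,\mathfrak{p}_2\}$ as a multiset. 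When $\mathfrak{p}_1,\mathfrak{p}_2$ are incomparable one may order both filtrations the same way; when one is properly contained in the other, the opening prime of each filtration is forced to be the maximal member of the common set $\ass{R/\mathfrak{a}} = \ass{R/\mathfrak{b}}$, so the orders again agree.

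With the filtrations lined up this way, $\mathfrak{a} \overset{\mathfrak{q}}\subset \mathfrak{r}$ and $\mathfrak{b} \overset{\mathfrak{q}}\subset \mathfrak{r}$ are regular $\mathfrak{q}$-prime extensions in $R$, so Lemma~\ref{intersectionresult} applies as soon as $\mathfrak{a}\cap\mathfrak{b} \neq \mathfrak{r}\cap\mathfrak{r} = \mathfrak{r}$; and this holds because $\mathfrak{a} \subseteq \mathfrak{r}$ (it precedes $\mathfrak{r}$ in the filtration) while $\mathfrak{a} \neq \mathfrak{r}$, since otherwise Remark~\ref{eg1_prime} would force $\mathcal{P}_{R}(\mathfrak{a}) = \mathfrak{r}$, a single prime. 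Lemma~\ref{intersectionresult} then yields that $\mathfrak{a}\cap\mathfrak{b} \overset{\mathfrak{q}}\subset \mathfrak{r}$ is a regular $\mathfrak{q}$-prime extension in $R$, and since $\mathfrak{r} \overset{\mathfrak{r}}\subset R$ is already a regular prime extension (it is the last step of the RPE filtration of $\mathfrak{a}$), the chain $\mathfrak{a}\cap\mathfrak{b} \overset{\mathfrak{q}}\subset \mathfrak{r} \overset{\mathfrak{r}}\subset R$ is an RPE filtration of $R$ over $\mathfrak{a}\cap\mathfrak{b}$. Reading off its primes gives $\mathcal{P}_{R}(\mathfrak{a}\cap\mathfrak{b}) = \mathfrak{q}\mathfrak{r} = \mathcal{P}_{R}(\mathfrak{a})$.

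The step I expect to require the most care is the alignment in the middle paragraph — guaranteeing that $\mathfrak{a}$ and $\mathfrak{b}$ admit RPE filtrations sharing the common penultimate prime $\mathfrak{r}$, so that Lemma~\ref{intersectionresult} can be applied term by term. This is exactly what is special about length at most two: for longer factorizations the intermediate terms of an RPE filtration need not be prime ideals, so there is no reason two ideals with the same factorization should admit filtrations agreeing away from $\mathfrak{a}$ and $\mathfrak{b}$ themselves, and the argument does not extend.
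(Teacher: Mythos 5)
Your proof is correct and follows essentially the same route as the paper: identify the penultimate term of each length-two RPE filtration with the prime $\mathfrak{p}_2$ itself (the paper cites Remark~\ref{eg1_prime} where you compute $(M_1:R)=M_1=\mathfrak{r}$ directly, which amounts to the same thing), then apply Lemma~\ref{intersectionresult} to the aligned first steps. Your extra care about why the two filtrations can be given the same ordering of primes is a point the paper passes over silently, but it is not a different argument.
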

\begin{proof}
If ${\mathcal{P}}_{R}(\p a) = {\mathcal{P}}_{R}(\p b) = \p p$ for some prime ideal $\p p$ in $R$, then by Remark \ref{eg1_prime}, $\p a = \p b = \p p$. Therefore, $\p a \cap \p b = \p p \overset{\p p} \subset R$ is an RPE filtration, and hence ${\mathcal{P}}_{R}(\p a \cap \p b) = \p p = {\mathcal{P}}_{R}(\p a)$.

If ${\mathcal{P}}_{R}(\p a) = {\mathcal{P}}_{R}(\p b) = {\p p}_1{\p p}_2$ for some prime ideals ${\p p}_1, {\p p}_2$ in $R$, we have RPE filtrations
$$\p a \overset{{\p p}_1}\subset {\p a}_1 \overset{{\p p}_2} \subset R$$
$$\p b \overset{{\p p}_1}\subset {\p b}_1 \overset{{\p p}_2} \subset R.$$
By Remark \ref{eg1_prime}, ${\p a}_1 = {\p b}_1 = {\p p}_2$. Since $\p a \subset {\p a}_1$ and $\p b \subset {\p b}_1$, $\p a \cap \p b \subset {\p p}_2 = {\p a}_1 \cap {\p b}_1$. So, using Lemma \ref{intersectionresult}, we have the RPE filtration $\p a \cap \p b \overset{{\p p}_1}\subset {\p p}_2 \overset{{\p p}_2} \subset R$. Therefore ${\mathcal{P}}_{R}(\p a \cap \p b) = {\mathcal{P}}_{R}(\p a)$.
\end{proof}

If ${\mathcal{P}}_{R}(\p a) = {\mathcal{P}}_{R}(\p b)$ and it is a product of more than two prime ideals, then ${\mathcal{P}}_{R}(\p a \cap \p b)$ need not be equal to ${\mathcal{P}}_{R}(\p a)$. For example, in $R = k[x,y,z]$, let $\p a = (x^2,y^2,xy,xz)$ and $\p b = (x^2,y^2,xy,yz)$. Then we have ${\mathcal{P}}_{R}(\p a) = {\mathcal{P}}_{R}(\p b) = (x,y,z)(x,y)(x,y)$ since there are RPE filtrations
\begin{center}
$(x^2,y^2,xy,xz) \overset{(x,y,z)} \subset (x,y^2) \overset{(x,y)} \subset (x,y) \overset{(x,y)} \subset R$
\end{center}
and \begin{center} $(x^2,y^2,xy,yz) \overset{(x,y,z)} \subset (x^2,y) \overset{(x,y)} \subset (x,y) \overset{(x,y)} \subset R$.\end{center}

But ${\mathcal{P}}_{R}(\p a \cap \p b) = (x,y)(x,y)$ since $$\p a \cap \p b = (x^2,y^2,xy) \overset{(x,y)} \subset (x,y) \overset{(x,y)} \subset R$$ is the RPE filtration of $R$ over $\p a \cap \p b$.

For submodules $N$ and $K$ of $M$ having the same prime factorization $\pp n$, ${\mathcal{P}}_{M}(N \cap K)$ need not be equal to ${\mathcal{P}}_{M}(N)$ even for $n=2$. For example, in the $\mathbb{Z}$-module $\mathbb{Z} \oplus \mathbb{Z}$, we have the RPE filtrations $2\mathbb{Z} \oplus 0 \overset{2\mathbb{Z}} \subset \mathbb{Z} \oplus 0 \overset{0} \subset \mathbb{Z} \oplus \mathbb{Z}$ and
$0 \oplus 2\mathbb{Z} \overset{2\mathbb{Z}}\subset 0 \oplus \mathbb{Z} \overset{0}\subset \mathbb{Z} \oplus \mathbb{Z}$. So the submodules $2\mathbb{Z} \oplus 0$ and $0 \oplus 2\mathbb{Z}$ have the same prime factorization. But $(2\mathbb{Z} \oplus 0) \cap (0 \oplus 2\mathbb{Z}) = 0 \oplus 0$ and ${\mathcal{P}}_{\mathbb{Z} \oplus \mathbb{Z}}(0 \oplus 0) = 0 \neq {\mathcal{P}}_{\mathbb{Z} \oplus \mathbb{Z}}(2\mathbb{Z} \oplus 0)$.

Now we find conditions for ${\mathcal{P}}_{M}(N \cap K) = {\mathcal{P}}_{M}(N)$ for submodules $N$ and $K$ of $M$ with ${\mathcal{P}}_{M}(N) = {\mathcal{P}}_{M}(K)$.

\begin{proposition}\label{moduleassoc}
Let $N$ and $K$ be submodules of $M$ with ${\mathcal{P}}_{M}(N) = {\mathcal{P}}_{M}(K) = \pp n$. Then ${\mathcal{P}}_{M}(N \cap K) = {\mathcal{P}}_{M}(N)$ if ${\p p}_i \not \supset {\p p}_j$ for every $i \neq j$.
\end{proposition}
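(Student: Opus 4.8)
The plan is to argue by induction on $n$. When $n=1$ the conclusion is immediate: $N \overset{{\p p}_1}\subset M$ and $K \overset{{\p p}_1}\subset M$ are regular prime extensions in $M$, and since $N \cap K \subseteq N \subsetneq M = M\cap M$, Lemma \ref{intersectionresult} applies to these two extensions and shows that $N \cap K \overset{{\p p}_1}\subset M$ is a regular prime extension in $M$; thus ${\mathcal{P}}_M(N \cap K) = {\p p}_1 = {\mathcal{P}}_M(N)$.

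For $n \geq 2$, note first that the hypothesis ${\p p}_i \not\supset {\p p}_j$ for $i \neq j$ says that ${\p p}_1,\dots,{\p p}_n$ form an antichain, so every ordering of them is admissible in the sense of Remark \ref{rem_interchange}; hence we may fix RPE filtrations $N = N_0 \overset{{\p p}_1}\subset N_1 \subset \cdots \overset{{\p p}_n}\subset N_n = M$ and $K = K_0 \overset{{\p p}_1}\subset K_1 \subset \cdots \overset{{\p p}_n}\subset K_n = M$ realizing the \emph{same} ordering ${\p p}_1,\dots,{\p p}_n$ of primes. By Lemma \ref{lemma1}, $N_1 = (N:{\p p}_1)$ and $K_1 = (K:{\p p}_1)$, so $N_1 \cap K_1 = (N \cap K : {\p p}_1)$; moreover $N_1, K_1$ are proper submodules of $M$ (as $n \geq 2$) and, reading off the tails of the two filtrations, ${\mathcal{P}}_M(N_1) = {\mathcal{P}}_M(K_1) = {\p p}_2 \cdots {\p p}_n$. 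Since $\{{\p p}_2,\dots,{\p p}_n\}$ is again an antichain, the induction hypothesis applied to $N_1$ and $K_1$ gives ${\mathcal{P}}_M\big((N \cap K : {\p p}_1)\big) = {\p p}_2 \cdots {\p p}_n$. Separately, the first steps $N_0 \overset{{\p p}_1}\subset N_1$ and $K_0 \overset{{\p p}_1}\subset K_1$ are regular prime extensions in $M$, so \emph{granting} the strict inclusion $N \cap K \subsetneq (N \cap K : {\p p}_1)$, Lemma \ref{intersectionresult} shows $N \cap K \overset{{\p p}_1}\subset (N \cap K : {\p p}_1)$ is a regular ${\p p}_1$-prime extension in $M$. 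Concatenating this extension with an RPE filtration of $M$ over $(N \cap K : {\p p}_1)$ — every step of which is a regular prime extension in $M$ — yields an RPE filtration of $M$ over $N \cap K$ whose prime product is ${\p p}_1 \cdot ({\p p}_2 \cdots {\p p}_n) = \pp n$, and hence ${\mathcal{P}}_M(N \cap K) = \pp n = {\mathcal{P}}_M(N)$.

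What remains, and where the antichain hypothesis is genuinely used, is the strict inclusion $N \cap K \subsetneq (N \cap K : {\p p}_1)$ — precisely the point that fails for $\mathbb{Z} \oplus \mathbb{Z}$ in the example above, where ${\p p}_1 = 2\mathbb{Z}$ properly contains ${\p p}_2 = 0$; I expect this to be the main obstacle. It is handled cleanly by Lemma \ref{moduleresult}: applied to $N$ with $i = 1$ (legitimate because ${\p p}_1 \not\supset {\p p}_j$ for every $j$), it gives ${\p p}_2 \cdots {\p p}_n M \not\subseteq N$, so we may choose $m \in {\p p}_2 \cdots {\p p}_n M$ with $m \notin N$, and then also $m \notin N \cap K$. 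On the other hand $\pp n M \subseteq N$ and $\pp n M \subseteq K$ — by Lemma \ref{lemma1}, since $M = N_n = (N : \pp n)$ and likewise $M = (K : \pp n)$ — so ${\p p}_1 m \subseteq \pp n M \subseteq N \cap K$. Therefore $m \in (N \cap K : {\p p}_1) \setminus (N \cap K)$, which is the required strict inclusion. (Throughout, ${\mathcal{P}}_M(N \cap K)$ is defined since $N \cap K \subseteq N \subsetneq M$.)
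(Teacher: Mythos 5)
Your proof is correct and follows essentially the same route as the paper: induction on $n$, reduction to $N_1\cap K_1$ via the induction hypothesis, and Lemma \ref{intersectionresult} to attach the first step, with Lemma \ref{moduleresult} (applied with $i=1$) supplying the crucial strictness $N\cap K \subsetneq N_1\cap K_1$. The only cosmetic difference is that you establish this strictness directly, by exhibiting an element $m \in {\p p}_2\cdots{\p p}_n M\setminus N$ lying in $(N\cap K:{\p p}_1)$, whereas the paper argues by contradiction from ${\mathcal{P}}_M(N\cap K)={\p p}_2\cdots{\p p}_n$; both hinge on the same fact from Lemma \ref{moduleresult}.
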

\begin{proof}
We prove by induction on $n$. If $n=1$, then we have ${\mathcal{P}}_{M}(N) = {\mathcal{P}}_{M}(K) = \p p$ for some prime ideal $\p p$ in $R$. So we have RPE filtrations $N \overset{\p p}\subset M$ and $K \overset{\p p}\subset M$. By Lemma \ref{intersectionresult}, $M$ is a regular ${\p p}$-prime extension of $N \cap K$, and hence $N \cap K \overset{\p p}\subset M$ is an RPE filtration. Therefore ${\mathcal{P}}_{M}(N \cap K) = \p p = {\mathcal{P}}_{M}(N)$.

Now let $n>1$, and assume the result is true for $n-1$. Suppose ${\mathcal{P}}_{M}(N) = {\mathcal{P}}_{M}(K) = \pp n$, where ${\p p}_i \not \supset {\p p}_j$ for every $1 \leq i,j \leq n$, $i \neq j$. Then we have RPE filtrations
$$N = {N}_0 \overset{{\p p}_1}\subset {N}_1 \subset \cdots \overset{{\p p}_{n-1}}\subset {N}_{n-1} \overset{{\p p}_n}\subset {N}_n = M$$
$$K = {K}_0 \overset{{\p p}_1}\subset {K}_1 \subset \cdots \overset{{\p p}_{n-1}}\subset {K}_{n-1} \overset{{\p p}_n}\subset {K}_n = M.$$
Since ${\mathcal{P}}_{M}({N}_1) = {\mathcal{P}}_{M}({K}_1) = {\p p}_2 \cdots {\p p}_n$, by induction hypothesis we get ${\mathcal{P}}_{M}({N}_1 \cap {K}_1) = {\mathcal{P}}_{M}({N}_1) = {\p p}_2 \cdots {\p p}_n$. So we have an RPE filtration $${N}_1 \cap {K}_1 \overset{{\p p}_2} \subset L_2 \subset \cdots \subset {L}_{n-1} \overset{{\p p}_n}\subset L_n= M.$$

If ${N}_1 \cap {K}_1 = N \cap K$, then ${\mathcal{P}}_{M}(N \cap K) = {\p p}_2 \cdots {\p p}_n$, which implies ${\p p}_2 \cdots {\p p}_n M \subseteq N \cap K$. But since ${\p p}_1 \not \supset {\p p}_j$ for every $1 \leq j \leq n$, by Lemma \ref{moduleresult}, we have ${\p p}_2 \cdots {\p p}_n M \not \subseteq N$ and ${\p p}_2 \cdots {\p p}_n M \not \subseteq K$, which is a contradiction. So $N \cap K \subset {N}_1 \cap {K}_1$. Then by Lemma \ref{intersectionresult}, $N \cap K \overset{{\p p}_1}\subset {N}_1 \cap {K}_1$ is a regular ${\p p}_1$-prime extension in $M$. Therefore, $$N \cap K \overset{{\p p}_1}\subset {N}_1 \cap {K}_1 \overset{{\p p}_2} \subset L_2 \subset \cdots \subset {L}_{n-1} \overset{{\p p}_n}\subset L_n= M$$ is an RPE filtration of $M$ over $N \cap K$. Hence ${\mathcal{P}}_{M}(N \cap K) = \pp n = {\mathcal{P}}_{M}(N)$.
\end{proof}

For a proper submodule $N$ of $M$, let $\p a =\ann{M/N}$. Next, we compare the prime factorizations $\mathcal{P}_R(\p a)$ and $\mathcal{P}_M(N)$.
\begin{lemma}\label{lemma(N:K)}
Let $N$ be a proper submodule of $M$. Then for any submodule $K$ of $M$, $\ass{\frac{R}{(N:K)}} \subseteq \ass{M/N}$.
\end{lemma}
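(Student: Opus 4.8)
The plan is to reduce the statement to the module-theoretic fact that every associated prime of $R/(N:K)$ arises as a colon ideal $(N:m)$ for some $m\in M$, which makes it an associated prime of $M/N$. First I would recall that $(N:K)=\{a\in R\mid aK\subseteq N\}$ equals $\ann{(N+K)/N}$ as an ideal of $R$; indeed $a\in(N:K)$ iff $aK\subseteq N$ iff $a$ kills the image of $K$ in $M/N$, and that image is $(N+K)/N$. Thus $R/(N:K)$ embeds into $M/N$ as the cyclic-module-generated submodule $(N+K)/N$ acted on by $R$; more precisely, $R/(N:K)\cong R/\ann{(N+K)/N}$, and this ring acts faithfully on the $R$-submodule $(N+K)/N$ of $M/N$.

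Next I would invoke the standard commutative-algebra facts: for a finitely generated module $T$ over a Noetherian ring, $\ass{R/\ann T}\subseteq\ass T$ (every associated prime of $R/\ann T$ is associated to $T$, since a prime minimal over $\ann T$ lies in $\supp T$ and hence in $\ass T$ for the minimal ones, and more directly $\ass{R/\ann T}\subseteq\supp T = \bigcup_{\p q\in\ass T}V(\p q)$, combined with the fact that the maximal primes coincide). Actually the cleanest route: since $T=(N+K)/N$ is a submodule of $M/N$, we have $\ass T\subseteq\ass{M/N}$ directly. And $\ass{R/\ann T}\subseteq\ass T$ holds for any finitely generated $T$ — this is because for $\p p\in\ass{R/\ann T}$, $\p p=\ann{\bar x}$ for some $x\in R$, and one shows $\p p\in\ass T$ by localizing: $\p p\in\ass T$ iff $\operatorname{depth}T_{\p p}=0$ iff $\p p\in\supp T$ and $\p p$ is not properly contained in another prime of $\supp T$ below... rather, simplest is $\ass{R/\ann T}\subseteq\supp T$ and a prime in $\supp T$ that is an associated prime of a cyclic module $R/\p p$-like quotient is again associated to $T$. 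I would just cite the textbook result $\ass{R/\ann{T}}\subseteq\ass{T}$ for finitely generated $T$ (e.g. from \cite{C}), since $T$ is finitely generated ($M$ is Noetherian).

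Putting these together: $\ass{R/(N:K)}=\ass{R/\ann{T}}\subseteq\ass{T}=\ass{(N+K)/N}\subseteq\ass{M/N}$, where the last inclusion is because $(N+K)/N$ is a submodule of $M/N$. The step I expect to require the most care is the identification $(N:K)=\ann{(N+K)/N}$ together with ensuring $R/(N:K)$ is genuinely being compared to a finitely generated module so that $\ass{R/\ann T}\subseteq\ass T$ applies; once that bookkeeping is done the containment chain is immediate. An alternative, more self-contained argument avoids citing $\ass{R/\ann T}\subseteq\ass T$: take $\p q\in\ass{R/(N:K)}$, write $\p q=((N:K):b)$ for some $b\in R$; then $bK\not\subseteq N$ but $\p q bK\subseteq N$, so picking $y\in bK\setminus N$ we get $\p q\subseteq(N:y)$, and conversely if $a(y)\subseteq N$ with $y=bk$ then $ab\in(N:K)$... this doesn't quite give equality directly, so one argues $\p q$ is a minimal prime over $(N:y)$ or uses primality at the maximal stage — this is why invoking the clean textbook lemma is preferable.
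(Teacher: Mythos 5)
Your argument is correct, but it takes a mildly different route from the paper's, and the difference is worth noting. The paper works prime by prime: for $\p q \in \ass{\frac{R}{(N:K)}}$ it writes $\p q = ((N:K):a) = \ann{\frac{aK+N}{N}}$ for a suitable $a \in R$, so that $\p q$ is the \emph{entire} annihilator of the nonzero finitely generated module $\frac{aK+N}{N}$; then $\supp{\frac{aK+N}{N}} = V(\p q)$ has $\p q$ as its unique minimal element, minimal primes of the support are associated, and $\ass{\frac{aK+N}{N}} \subseteq \ass{M/N}$ finishes the argument. You instead observe once and for all that $(N:K) = \ann{T}$ with $T = (N+K)/N$ a finitely generated submodule of $M/N$, and cite the general containment $\ass{R/\ann{T}} \subseteq \ass{T}$. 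That containment is a genuine theorem for finitely generated $T$ (write $T = Rx_1 + \cdots + Rx_n$; then $R/\ann{T} \hookrightarrow \bigoplus_i R/\ann{x_i} \hookrightarrow T^n$, so $\ass{R/\ann{T}} \subseteq \ass{T^n} = \ass{T}$), so your chain $\ass{\frac{R}{(N:K)}} \subseteq \ass{T} \subseteq \ass{M/N}$ is valid. Be aware, though, that your in-line sketches of why the cited containment holds are not proofs: the support/minimality remark only captures minimal associated primes of $R/\ann{T}$, and the depth/localization idea is left dangling, as is your final element-chasing attempt (which, as you note, stalls because $\p q = ((N:K):b)$ only gives $\p q \subseteq (N:y)$ for $y \in bK \setminus N$, not equality). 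So you really are resting on the citation. The paper's multiplication by $a$ is exactly the device that turns a general associated prime of $R/\ann{T}$ into a full annihilator, which is why its proof is self-contained in three lines; your version buys a cleaner statement at the cost of importing a lemma that itself needs the embedding argument above.
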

\begin{proof}
Let $\p p \in \ass{\frac{R}{(N:K)}}$. Then $\p p = ((N:K) : a) = \ann{\frac{aK + N}{N}}$ for some $a \in R$. So $\p p$ is a minimal element in $\supp{\frac{aK + N}{N}}$, and therefore $\p p \in \ass{\frac{aK + N}{N}} \subseteq \ass{M/N}$.
\end{proof}

\begin{lemma}\label{lemma(N:K)P^n}
Let $N$ be a submodule of $M$ with $\mathcal{P}_M(N) = {\p p}^n$. Then for any submodule $K$ of $M$, $\mathcal{P}_R((N:K)) = {\p p}^r$, where $r \leq n$.
\end{lemma}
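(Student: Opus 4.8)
The plan is to extract from a given RPE filtration of $M$ over $N$ an RPE filtration of $R$ over $(N:K)$, thereby showing $\mathcal{P}_R((N:K))$ is a power of $\p p$, and then to bound its length. First I would invoke Lemma~\ref{lemma(N:K)} to see that $\ass{R/(N:K)} \subseteq \ass{M/N} = \{\p p\}$ (the equality by Lemma~\ref{lemma*}, since $\mathcal{P}_M(N) = \p p^n$), so $(N:K)$ is $\p p$-primary and in particular $\mathcal{P}_R((N:K))$, if it exists, must be a power of $\p p$; say $\mathcal{P}_R((N:K)) = \p p^r$. It remains to show $r \le n$.

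For the bound, the natural move is to compare the two filtrations via the functor $(\,\cdot : K)$ on submodules. Starting from an RPE filtration $N = M_0 \overset{\p p}\subset M_1 \overset{\p p}\subset \cdots \overset{\p p}\subset M_n = M$ of $M$ over $N$, consider the chain of ideals $(M_0 : K) \subseteq (M_1 : K) \subseteq \cdots \subseteq (M_n : K) = R$. By Lemma~\ref{lemma1} each $M_i = (N : \p p^i)$, so $(M_i : K) = (N : \p p^i K)$, and one checks $\p p \cdot (M_{i+1} : K) \subseteq (M_i : K)$ directly from $\p p M_{i+1} \subseteq M_i$. Thus each inclusion in the chain is either an equality or a $\p p$-prime extension step (it is a prime extension since $(M_i:K)$ is $\p p$-primary and $\p p$ kills the quotient; regularity follows because $\p p$ is the only associated prime, hence maximal). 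Discarding the equalities leaves an RPE filtration of $R$ over $(M_0:K) = (N:K)$ of length at most $n$, which by uniqueness of the prime factorization (\cite[Theorem~22]{A}) forces $r \le n$.

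The step I expect to be the main obstacle is verifying cleanly that each nontrivial inclusion $(M_i:K) \subsetneq (M_{i+1}:K)$ is genuinely a \emph{regular} $\p p$-prime extension in $R$, not merely a $\p p$-prime extension: one must confirm that $(M_i:K)$ is $\p p$-prime in $(M_{i+1}:K)$ using the containment $\p p(M_{i+1}:K) \subseteq (M_i:K)$ together with primary-ness, and then that $\p p$ is maximal in $\ass{R/(M_i:K)}$, which again follows from Lemma~\ref{lemma(N:K)} applied with $M_{i+1}$ in place of $M$ (or directly from $\ass{R/(M_i:K)} \subseteq \ass{M/M_i} = \{\p p\}$). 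A secondary subtlety is ensuring the telescoped chain, after removing repeats, has the form required of an RPE filtration (consecutive regular prime extensions inside the ambient ring $R$), which is exactly what Lemma~\ref{lemma} / Lemma~\ref{intersectionresult}-style reasoning guarantees once each individual step is known to be a regular prime extension in $R$.
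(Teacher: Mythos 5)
Your proposal is correct, and while its first step coincides with the paper's, the bound $r \le n$ is obtained by a genuinely different route. The paper, like you, first applies Lemma~\ref{lemma(N:K)} to get $\ass{R/(N:K)} \subseteq \ass{M/N} = \{\p p\}$, so that $\mathcal{P}_R((N:K)) = {\p p}^r$. But for the bound it argues by contradiction in three lines: if $r > n$, then Lemma~\ref{moduleresult} applied to the ideal $(N:K)$ in $R$ (dropping one factor from ${\p p}^r$) gives ${\p p}^{r-1} \not\subseteq (N:K)$, so some $a \in {\p p}^{r-1}$ has $aK \not\subseteq N$; yet $a \in {\p p}^{r-1} \subseteq {\p p}^n$ and ${\p p}^n M \subseteq N$ (Lemma~\ref{lemma1}) force $aK \subseteq aM \subseteq N$, a contradiction. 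You instead push the entire RPE filtration $N = M_0 \subset \cdots \subset M_n = M$ through $(\,\cdot\,:K)$, and the verifications you flag as the main obstacles do go through: each regular step satisfies $M_{i+1} = (M_i : \p p)$, hence $(M_{i+1}:K) = ((M_i:K):\p p)$, and $\ass{R/(M_i:K)} \subseteq \ass{M/M_i} = \{\p p\}$ by Lemmas~\ref{lemma(N:K)} and~\ref{lemma*}, which gives both the primariness needed for the prime-extension property and the maximality of $\p p$; telescoping then yields an RPE filtration of $R$ over $(N:K)$ of length at most $n$, and uniqueness of the factorization finishes it. Your route is longer but constructive — it realizes the filtration of $(N:K)$ as the image under $(\,\cdot\,:K)$ of the filtration of $N$, and in fact reproves that the factorization is a power of $\p p$ without needing the first paragraph at all — whereas the paper's route is shorter because it reuses Lemma~\ref{moduleresult}, which was set up precisely to produce such length bounds.
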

\begin{proof}
By Lemma \ref{lemma(N:K)}, $\ass{\frac{R}{(N:K)}} \subseteq \ass{M/N} = \{ \p p \}$. So $\mathcal{P}_R((N:K)) = {\p p}^r$ for some $r$. Suppose $r > n$. Then ${\p p}^{r-1} \subseteq{\p p}^n$. Also, by Lemma \ref{moduleresult}, ${\p p}^{r-1} \not\subseteq (N:K)$. So there exists $a \in {\p p}^{r-1}$ such that $aK \not\subseteq N$. Since $a \in {\p p}^n$ and ${\p p}^n M \subseteq N$, we get $aK \subseteq N$, a contradiction. Therefore $r \leq n$.
\end{proof}

\begin{lemma}\label{ass(R/a)}
Let $N$ be a proper submodule of $M$ and $\p a =\ann{M/N}$. Then $\ass{R/{\p a}} \subseteq \ass{M/N}$. Also, $\ass{R/{\p a}} = \ass{M/N}$ if every prime ideal in $\ass{M/N}$ is isolated.
\end{lemma}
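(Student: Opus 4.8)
The plan is to read off the inclusion $\ass{R/\p a} \subseteq \ass{M/N}$ from a result already proved, and then establish the reverse inclusion under the hypothesis by identifying $\ass{M/N}$ with the set of minimal primes over $\p a$. First, since $N$ is a proper submodule of $M$, we have $\p a = \ann{M/N} = (N:M)$, so applying Lemma~\ref{lemma(N:K)} with $K = M$ gives $\ass{R/\p a} \subseteq \ass{M/N}$, which is the first assertion.

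For the second assertion, assume every prime in $\ass{M/N}$ is isolated, that is, minimal in $\ass{M/N}$ under inclusion. Since $M/N$ is a finitely generated $R$-module, $\supp{M/N} = V(\ann{M/N}) = V(\p a)$, so the minimal elements of $\supp{M/N}$ are exactly the minimal primes over $\p a$, and each of these lies in $\ass{M/N}$. As the minimal elements of $\ass{M/N}$ coincide with those of $\supp{M/N}$, the hypothesis forces $\ass{M/N}$ to consist precisely of the minimal primes over $\p a$. Using the standard fact that every minimal prime over $\p a$ is an associated prime of $R/\p a$, we obtain $\ass{M/N} \subseteq \ass{R/\p a}$; combined with the first inclusion this yields $\ass{R/\p a} = \ass{M/N}$.

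There is no essential obstacle. The points needing care are reading \emph{isolated} so that the hypothesis yields $\ass{M/N} = \mathrm{Min}(\supp{M/N})$, and correctly invoking the standard facts that $\supp{M/N} = V(\p a)$ and that the minimal primes of the support of a finitely generated module are associated primes. If a self-contained argument for $\p p \in \ass{R/\p a}$ with $\p p$ minimal in $\ass{M/N}$ is preferred, one can localize at $\p p$: then $(M/N)_{\p p}$ has finite length over $R_{\p p}$, its annihilator $\p a R_{\p p}$ is $\p p R_{\p p}$-primary, so $\p p R_{\p p} \in \ass{R_{\p p}/\p a R_{\p p}}$ and hence $\p p \in \ass{R/\p a}$.
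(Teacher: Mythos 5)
Your proposal is correct and follows essentially the same route as the paper: the first inclusion via Lemma~\ref{lemma(N:K)} with $K=M$, and the reverse inclusion by identifying the minimal elements of $\ass{M/N}$ with the minimal elements of $\supp{M/N}=V(\p a)=\supp{R/\p a}$, which all lie in $\ass{R/\p a}$. The optional localization argument you sketch is a fine self-contained substitute for the standard fact but is not needed.
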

\begin{proof}
Taking $K=M$ in Lemma \ref{lemma(N:K)}, we get $\ass{R/{\p a}} \subseteq \ass{M/N}$. Suppose every prime ideal in $\ass{M/N}$ is isolated. Since $\ann{R/\p a} = \ann{M/N}$, $\supp{R/{\p a}} = \supp{M/N}$, and they have the same set of minimal elements. Therefore, we have
\begin{multline*}
\ass{M/N} = \min{\ass{M/N}} = \min{\supp{M/N}} = \\ \min{\supp{R/\p a}} =  \min{\ass{R/\p a}} \subseteq \ass{R/\p a}.
\end{multline*}
Hence $\ass{R/{\p a}} = \ass{M/N}$.
\end{proof}

In the above lemma, the condition that all the prime ideals in $\ass{M/N}$ must be isolated cannot be omitted. For, if $M$ is the $\mathbb{Z}$-module $\mathbb{Z} \oplus \mathbb{Z}$ and $N=2\mathbb{Z} \oplus 0$, then $\ass{M/N} = \{ 2\mathbb{Z} , 0 \}$. But since $\p a = \ann{M/N} = 0$, $\ass{R/{\p a}} = \{ 0 \}$.

So $\mathcal{P}_M(N)$ need not be equal to $\mathcal{P}_R(\p a)$.

\begin{theorem}\label{thm_divides}
Let $N$ be a proper submodule of $M$ and $\p a = \ann{M/N}$. Then $\mathcal{P}_M(N)$ is a multiple of $\mathcal{P}_R(\p a)$ as a product of prime ideals.
\end{theorem}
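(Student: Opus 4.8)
The plan is to push an RPE filtration of $M$ over $N$ down to $R$ by applying the annihilator termwise. Fix an RPE filtration $N = M_0 \overset{{\p p}_1}\subset M_1 \subset \cdots \overset{{\p p}_n}\subset M_n = M$, so that $\mathcal{P}_M(N) = \pp n$, and set $\p a_i = \ann{M/M_i} = (M_i : M)$ for $0 \le i \le n$; then $\p a_0 = \p a$, $\p a_n = R$, and $\p a_0 \subseteq \p a_1 \subseteq \cdots \subseteq \p a_n$. The first step is to record the identity $\p a_i = (\p a_{i-1} : {\p p}_i)$ for every $i$: since $M_i$ is the regular ${\p p}_i$-prime extension of $M_{i-1}$ in $M$ we have $M_i = (M_{i-1} : {\p p}_i)$, hence for $a \in R$, $a \in \p a_i \iff aM \subseteq M_i \iff {\p p}_i\,aM \subseteq M_{i-1} \iff a\,{\p p}_i \subseteq \p a_{i-1}$, the last equivalence using $\p a_{i-1} = (M_{i-1}:M)$ and that ${\p p}_i$ is finitely generated.

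Next I would examine each step $\p a_{i-1} \subseteq \p a_i$. By Lemma \ref{ass(R/a)} (applied to $M_{i-1}$) together with Lemma \ref{lemma*}, $\ass{R/\p a_{i-1}} \subseteq \ass{M/M_{i-1}} = \{{\p p}_i, \dots, {\p p}_n\}$, and ${\p p}_i$ is a maximal element of this last set because $M_i$ is, by definition of an RPE filtration, a regular ${\p p}_i$-prime extension of $M_{i-1}$. I then claim that for each $i$, either $\p a_{i-1} = \p a_i$, or else $\p a_{i-1} \overset{{\p p}_i}\subset \p a_i$ is a regular ${\p p}_i$-prime extension in $R$. Indeed, suppose $\p a_{i-1} \neq \p a_i$ and choose $x \in \p a_i \setminus \p a_{i-1}$; then ${\p p}_i x \subseteq \p a_{i-1}$, so ${\p p}_i \subseteq (\p a_{i-1} : x) \subsetneq R$, and any associated prime $\p q$ of the nonzero cyclic submodule $(Rx + \p a_{i-1})/\p a_{i-1}$ of $R/\p a_{i-1}$ satisfies ${\p p}_i \subseteq (\p a_{i-1}:x) \subseteq \p q$ with $\p q \in \ass{R/\p a_{i-1}}$; since $\p q$ lies in $\{{\p p}_i,\dots,{\p p}_n\}$ and ${\p p}_i$ is maximal there, $\p q = {\p p}_i$. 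Thus ${\p p}_i$ is a maximal element of $\ass{R/\p a_{i-1}}$, so the regular ${\p p}_i$-prime extension of $\p a_{i-1}$ in $R$ is precisely $(\p a_{i-1} : {\p p}_i) = \p a_i$, proving the claim. (If one prefers, the prime-submodule property can be checked by hand: given $az \in \p a_{i-1}$ with $z \in \p a_i\setminus\p a_{i-1}$, pick $m \in M$ with $zm \notin M_{i-1}$; from ${\p p}_i z \subseteq \p a_{i-1}$ one gets ${\p p}_i(zm)\subseteq M_{i-1}$, so $zm \in (M_{i-1}:{\p p}_i) = M_i$, while $azM \subseteq M_{i-1}$ gives $a(zm)\in M_{i-1}$; as $M_{i-1}\overset{{\p p}_i}\subset M_i$ is prime, $a \in (M_{i-1}:M_i) = {\p p}_i$.)

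Finally I would delete from the chain $\p a = \p a_0 \subseteq \p a_1 \subseteq \cdots \subseteq \p a_n = R$ the indices $i$ with $\p a_{i-1} = \p a_i$. Listing the distinct values as $\p a = \p b_0 \subsetneq \p b_1 \subsetneq \cdots \subsetneq \p b_s = R$ and letting $i_k$ be the index at which the chain first attains $\p b_k$ — so that $\p a_{i_k - 1} = \p b_{k-1}$ and $\p a_{i_k} = \p b_k$ — the previous step shows each $\p b_{k-1} \overset{{\p p}_{i_k}}\subset \p b_k$ is a regular ${\p p}_{i_k}$-prime extension in $R$; hence $\p a = \p b_0 \overset{{\p p}_{i_1}}\subset \p b_1 \subset \cdots \overset{{\p p}_{i_s}}\subset \p b_s = R$ is an RPE filtration of $R$ over $\p a$, which is nonempty since $N$ is proper. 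By the uniqueness of the generalized prime ideal factorization, $\mathcal{P}_R(\p a) = {\p p}_{i_1}\cdots {\p p}_{i_s}$; and since $1 \le i_1 < \cdots < i_s \le n$, the multiset $\{{\p p}_{i_1},\dots,{\p p}_{i_s}\}$ is contained in $\{{\p p}_1,\dots,{\p p}_n\}$. Therefore $\mathcal{P}_M(N) = \pp n$ is a multiple of $\mathcal{P}_R(\p a)$, as required.

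The crux — the step I expect to need the most care — is the middle paragraph: showing that applying the annihilator sends each prime extension $M_{i-1}\overset{{\p p}_i}\subset M_i$ either to an equality or to a regular prime extension of ideals, and then verifying that after discarding the collapsed steps the surviving ones still chain together into a genuine RPE filtration of $R$ over $\p a$ (one must observe that the module immediately preceding $\p b_k$ in the reduced chain is indeed $\p b_{k-1}$, which holds precisely because every intermediate $\p a_{j-1}\subseteq \p a_j$ is an equality).
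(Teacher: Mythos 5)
Your proof is correct, and it takes a genuinely different route from the paper's. The paper's argument splits into cases: it first treats $\mathcal{P}_M(N)={\p p}^n$ via Lemma \ref{lemma(N:K)P^n}, and in general it starts from an RPE filtration of $R$ over $\p a$ (whose primes lie among those of $M/N$ by Lemma \ref{ass(R/a)}), writes $\mathcal{P}_R(\p a)=\ppp k$ against $\mathcal{P}_M(N)=\ppn k$ with the primes ordered so that ${\p p}_i \not\subseteq {\p p}_j$ for $i<j$, and refutes $r_i>n_i$ by a colon-ideal computation comparing $(\p a : \ppp{i-1}{{\p p}_i}^{n_i})$ with $(N_i':M)$. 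You instead apply $(-:M)$ termwise to a single RPE filtration of $M$ over $N$, prove the identity $\p a_i=(\p a_{i-1}:{\p p}_i)$, and show that each step either collapses or is itself a regular ${\p p}_i$-prime extension of ideals; your key observation---that when $\p a_{i-1}\neq\p a_i$ the prime ${\p p}_i$ lies in and is maximal in $\ass{R/\p a_{i-1}}\subseteq\ass{M/M_{i-1}}=\{{\p p}_i,\dots,{\p p}_n\}$, so that $(\p a_{i-1}:{\p p}_i)=\p a_i$ is the regular extension---is verified correctly, as is the splicing of the surviving steps into an RPE filtration of $R$ over $\p a$. Your approach is more constructive (it exhibits the ideal filtration as the image of the module filtration under $(-:M)$), needs no reordering of the primes and no case split, and in fact proves slightly more: the same argument applied to $(-:K)$ for any submodule $K\not\subseteq N$ shows $\mathcal{P}_M(N)$ is a multiple of $\mathcal{P}_R((N:K))$, subsuming Lemma \ref{lemma(N:K)P^n}. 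What the paper's formulation buys is the explicit bookkeeping of the exponents $r_i\leq n_i$ in a form that is reused verbatim in the proof of Theorem \ref{thm_divides=}.
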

\begin{proof}
If $\mathcal{P}_M(N) = {\p p}^n$ for some prime ideal $\p p$ in $R$, then taking $K=M$ in Lemma \ref{lemma(N:K)P^n} we get $\mathcal{P}_R(\p a) = {\p p}^r$, where $r \leq n$. Hence $\mathcal{P}_M(N)$ is a multiple of $\mathcal{P}_R(\p a)$.

Now let $\mathcal{P}_M(N) = \ppn k$, where ${\p p}_i$'s are distinct primes. Then we have an RPE filtration
\begin{multline}\label{mod_m/n_k}
N \overset{{\p p}_1}\subset N_1 \subset \cdots \overset{{\p p}_1}\subset N_{n_1} \overset{{\p p}_2}\subset N_{n_1+1} \subset \cdots \overset{{\p p}_i}\subset N_{n_1 + \cdots + n_i}\\ \overset{{\p p}_{i+1}}\subset \cdots \overset{{\p p}_k}\subset N_{n_1 + \cdots + n_k} = M
\end{multline}
such that ${\p p}_i \not\subseteq {\p p}_j$ for $1 \leq i < j \leq k$. Let $\p a  \overset{{\p q}_1}\subset {\p a}_1 \subset \cdots \overset{{\p q}_m} \subset {\p a}_m = R$ be an RPE filtration of $R$ over $\p a$. Then $\{ \qqdots m \} = \ass{R/ \p a} \subseteq \ass{M/N}$ [Lemma \ref{ass(R/a)}]. So $\mathcal{P}_R(\p a) = \ppp k$, where $r_i \geq 0$. Then by Remark \ref{rem_interchange}, we can have an RPE filtration
\begin{equation}\label{ide_r/a_k}
{\p a} \overset{{\p p}_1}\subset {\p a}_1 \subset \cdots \overset{{\p p}_1}\subset {\p a}_{r_1} \overset{{\p p}_2}\subset {\p a}_{r_1+1} \subset \cdots \overset{{\p p}_i}\subset {\p a}_{r_1 + \cdots + r_i} \overset{{\p p}_{i+1}}\subset \cdots  \overset{{\p p}_k}\subset {\p a}_{r_1 + \cdots + r_k} = R.
\end{equation}
Suppose $r_i > n_i$ for some $i$ and let $i$ be the least such integer. Let $N_i' = N_{n_1 + \cdots + n_i}$ and ${\p a}_i' = {\p a}_{r_1 + \cdots + r_{i-1} + n_i}$. Then $N_i' = (N : \ppn i)$ and ${\p a}_i' = (\p a :  \ppp {i-1}{{\p p}_i}^{n_i})$ by Lemma \ref{lemma1}. Let $a \in {\p a}_i'$. Then $$a \ppn i \subseteq a \ppp {i-1}{{\p p}_i}^{n_i} \subseteq \p a = (N : M).$$
That is, $aM \subseteq (N : \ppn i) = N_i'$. Therefore, $a \in (N_i' : M)$, and this implies ${\p a}_i' \subseteq (N_i' : M)$.

We have ${\p a}_i' = {\p a}_{r_1 + \cdots + r_{i-1} + n_i} \subset {\p a}_{r_1 + \cdots + r_i}$ from the filtration (\ref{ide_r/a_k}) since $n_i < r_i$. So we have ${\p p}_i \in \ass{R/{\p a}_i'}$, and therefore for some $b \in R$, ${\p p}_i = ({\p a}_i' : b)$. Clearly $({\p a}_i' : b) \subseteq ((N_i' : M) : b)$. If $b \in (N_i' : M)$, this implies $\ppn i bM \subseteq N$. That is, $\ppn i b \subseteq \p a$. Then
$$({{\p p}_1}^{n_1 - r_1} {{\p p}_2}^{n_2 - r_2}  \cdots {{\p p}_{i-1}}^{(n_{i-1}) - (r_{i-1})}) \, b \subseteq (\p a : \ppp {i-1}{{\p p}_i}^{n_i} ) = {\p a}_i'.$$
By assumption, $n_j \geq r_j$ for $j=1, \dots, i-1$. If $n_j = r_j$ for $j=1, \dots, i-1$, then this implies $b \in {\p a}_i'$, i.e., $({\p a}_i' : b)=R$, a contradiction. If $n_j > r_j$ for some $j \in \{ 1, \dots, i-1 \}$, then 
$${{\p p}_1}^{n_1 - r_1} {{\p p}_2}^{n_2 - r_2}  \cdots {{\p p}_{i-1}}^{(n_{i-1}) - (r_{i-1})} \; \subseteq ({\p a}_i' : b) = {\p p}_i$$
implies ${\p p}_j \subseteq {\p p}_i$ for some $j<i$, a contradiction. So $b \notin (N_i' : M)$. Then ${\p p}_i \subseteq ((N_i' : M) : b) \subseteq \p q$ for some $\p q \in \ass{\frac{R}{(N_i' : M)}}$. From Lemma \ref{lemma(N:K)} and (\ref{mod_m/n_k}) we get $\ass{\frac{R}{(N_i' : M)}} \subseteq \ass{M/N_i'} = \{{\p p}_{i+1}, \dots , {\p p}_k \}$. This implies ${\p p}_i \subseteq {\p p}_l$ for some $l \in \{ i+1, \dots, k\}$, which is not true. Therefore $r_i \leq n_i$ for all $i$. Hence $\mathcal{P}_M(N)$ is a multiple of $\mathcal{P}_R(\p a)$.
\end{proof}

\begin{theorem}\label{thm_divides=}
Let $N$ be a proper submodule of $M$ and $\p a = \ann{M/N}$. If every prime ideal in $\ass{M/N}$ is isolated, then $\mathcal{P}_R(\p a) = \mathcal{P}_M(N)$.
\end{theorem}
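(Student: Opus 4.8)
The plan is to derive the reverse of the divisibility proved in Theorem~\ref{thm_divides}; together with Theorem~\ref{thm_divides} this forces equality of the two formal products. First I would fix notation: by Lemma~\ref{ass(R/a)} the hypothesis gives $\ass{R/\p a} = \ass{M/N}$, and, writing this common set as $\{\ppdots k\}$, we have $\mathcal{P}_M(N) = \ppn k$ and $\mathcal{P}_R(\p a) = \ppp k$ with every $n_i \geq 1$, every $r_i \geq 1$, and $r_i \leq n_i$ for all $i$ by Theorem~\ref{thm_divides}. Since each $\p p_i$ is isolated in $\ass{M/N}$, the $\p p_i$ are pairwise incomparable. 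Fixing $i$, it then suffices to prove $r_i \geq n_i$.

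My first step would be to describe $n_i$ and $r_i$ as ``colon--stabilization lengths''. Because the $\p p_i$ are pairwise incomparable, Remark~\ref{rem_interchange} produces an RPE filtration of $M$ over $N$ whose first $n_i$ steps all carry $\p p_i$: $N = M_0 \overset{\p p_i}\subset \cdots \overset{\p p_i}\subset M_{n_i} \subset \cdots \subset M_n = M$. By Lemma~\ref{lemma1} one has $M_t = (N : \p p_i^{t})$ for $0 \leq t \leq n_i$, with $M_0 \subsetneq \cdots \subsetneq M_{n_i}$ (RPE steps being proper), and by Lemma~\ref{lemma*} no associated prime of $M/M_{n_i}$ contains $\p p_i$, whence $(N : \p p_i^{n_i}) = (N : \p p_i^{n_i+1})$. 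Thus $n_i$ is the least $t$ with $(N : \p p_i^{t}) = (N : \p p_i^{t+1})$, and applying the same reasoning to $\p a$ in $R$ shows $r_i$ is the least $t$ with $(\p a : \p p_i^{t}) = (\p a : \p p_i^{t+1})$. Finally, the elementary identity $b\p p_i^{t}M \subseteq N \iff bM \subseteq (N : \p p_i^{t})$ yields $(\p a : \p p_i^{t}) = \big( (N : \p p_i^{t}) : M \big)$ for every $t$.

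By the previous paragraph, $r_i \geq n_i$ will follow once I show $\big((N:\p p_i^{t}):M\big) \subsetneq \big((N:\p p_i^{t+1}):M\big)$ for each $t$ with $0 \leq t < n_i$. Setting $P = (N:\p p_i^{t})$, note $P \subsetneq (P:\p p_i) = (N:\p p_i^{t+1})$ and $\p p_i \in \ass{M/P}$ (Lemma~\ref{lemma*}). Since $\p p_i$ is minimal in $\supp{M/N}$ and $\supp{M/P} \subseteq \supp{M/N}$, the prime $\p p_i$ is minimal in $\supp{M/P} = \supp{R/(P:M)}$, hence $\p p_i \in \ass{R/(P:M)}$. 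So there is $b \in R$ with $\p p_i b \subseteq (P:M)$ and $b \notin (P:M)$; the first condition means $\p p_i bM \subseteq P$, i.e.\ $bM \subseteq (P:\p p_i) = (N:\p p_i^{t+1})$, while $b \notin (P:M)$ means $bM \not\subseteq P$. This is the required strict inclusion, so $(\p a : \p p_i^{t}) \subsetneq (\p a : \p p_i^{t+1})$ for all $t < n_i$, giving $r_i \geq n_i$ and hence $r_i = n_i$. As $i$ is arbitrary, $\mathcal{P}_R(\p a) = \ppn k = \mathcal{P}_M(N)$.

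The main obstacle is the last step: upgrading a strict inclusion of submodules, $(N:\p p_i^{t}) \subsetneq (N:\p p_i^{t+1})$, to a strict inclusion of their $M$-annihilators. This is genuinely false for arbitrary prime extensions --- the example $M = \mathbb{Z}\oplus\mathbb{Z}$, $N = 2\mathbb{Z}\oplus 0$ (where $(N:M) = 0$) already exhibits the failure --- and what makes it work here is precisely the isolatedness of $\p p_i$, which keeps $\p p_i$ inside $\ass{R/(P:M)}$ and so supplies the witness $b$. Everything else is bookkeeping with the interchange lemma (Lemma~\ref{interchange}, Remark~\ref{rem_interchange}) and the colon identities, together with the standard fact that a f.g.\ module and $R$ modulo its annihilator have the same support, hence the same minimal primes.
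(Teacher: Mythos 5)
Your argument is correct and rests on essentially the same mechanism as the paper's proof: the colon identity $(\p a : {\p p}_i^{\,t}) = \big((N : {\p p}_i^{\,t}) : M\big)$ together with the fact that an isolated ${\p p}_i$, being minimal in $\supp{M/N}$, must persist as an associated prime of $R/\big((N:{\p p}_i^{\,t}):M\big)$. You package this as a direct proof that the chain $(\p a : {\p p}_i^{\,t})$ grows strictly for every $t < n_i$, whereas the paper derives a contradiction at the single level $t = r_1$ by applying Lemma~\ref{ass(R/a)} to $N_{r_1}$ and comparing with Lemma~\ref{lemma*}; this is a reorganization rather than a genuinely different route.
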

\begin{proof}
Let $\mathcal{P}_M(N) = \ppn k$, where ${\p p}_i$'s are distinct primes. Then by Lemma \ref{ass(R/a)} and Theorem \ref{thm_divides} we have $\mathcal{P}_R(\p a) = \ppp k$, where $1 \leq r_i \leq n_i$ for $1 \leq i \leq k$. So by Remark \ref{rem_interchange} we have RPE filtrations
\begin{multline*}
N \overset{{\p p}_1}\subset N_1 \subset \cdots \overset{{\p p}_1}\subset N_{n_1} \overset{{\p p}_2}\subset N_{n_1+1} \subset \cdots \overset{{\p p}_i}\subset N_{n_1 + \cdots + n_i} \\ \overset{{\p p}_{i+1}}\subset \cdots \overset{{\p p}_k}\subset N_{n_1 + \cdots + n_k} = M;
\end{multline*}
$${\p a} \overset{{\p p}_1}\subset {\p a}_1 \subset \cdots \overset{{\p p}_1}\subset {\p a}_{r_1} \overset{{\p p}_2}\subset {\p a}_{r_1+1} \subset \cdots \overset{{\p p}_i}\subset {\p a}_{r_1 + \cdots + r_i} \overset{{\p p}_{i+1}}\subset \cdots  \overset{{\p p}_k}\subset {\p a}_{r_1 + \cdots + r_k} = R.$$
Note that $\ass{R/{\p a}_{r_1}} = \{{\p p}_2, \dots , {\p p}_k \}$ [Lemma \ref{lemma*}].

Suppose $r_i < n_i$ for some $i$. Since ${\p p}_i \not \subseteq {\p p}_j$ whenever $i \neq j$, without loss of generality, we assume that $i=1$ by applying Remark \ref{rem_interchange}.

By Lemma \ref{lemma1}, $N_{r_1} = (N : {{\p p}_1}^{r_1} )$ and ${\p a}_{r_1} = (\p a : {{\p p}_1}^{r_1} )$. So for $a \in R$, we have
\begin{tabular}{l l l}
$a \in {\p a}_{r_1}$ & $\Leftrightarrow$ & ${{\p p}_1}^{r_1} a \subseteq \p a = (N:M)$ \\
 & $\Leftrightarrow$ & ${{\p p}_1}^{r_1} a M \subseteq N$ \\
 & $\Leftrightarrow$ & $aM \subseteq (N : {{\p p}_1}^{r_1} ) = N_{r_1}$\\
 & $\Leftrightarrow$ & $a \in (N_{r_1} : M)$.
\end{tabular}

Therefore ${\p a}_{r_1} = (N_{r_1} : M)$. Since every prime ideal in $\ass{M/N_{r_1}}$ is isolated, by Lemma \ref{ass(R/a)}, $$\ass{M/N_{r_1}} = \ass{R/{(N_{r_1} : M)}} = \ass{R/{\p a}_{r_1}} = \{{\p p}_2, \dots , {\p p}_k \}.$$ Since
$$N_{r_1}\overset{{\p p}_1}\subset N_{r_1 + 1} \subset \cdots \overset{{\p p}_1}\subset N_{n_1} \overset{{\p p}_{2}}\subset \cdots \overset{{\p p}_k}\subset N_{n_1 + \cdots + n_k} = M$$ is an RPE filtration, ${\p p}_1 \in \ass{M/N_{r_1}}$, a contradiction. Therefore $r_i = n_i$ for all $i$. Hence $\mathcal{P}_R(\p a) = \mathcal{P}_M(N)$.
\end{proof}


%
\vskip 0.4 true cm

\bibliographystyle{amsplain}

\bigskip
\bigskip

\noindent {\footnotesize {\bf  K. R. Thulasi }\; \\
Department of Mathematics, Pondicherry University, Pondicherry, India.\\
 {\tt thulasi.3008@gmail.com}

\noindent {\footnotesize {\bf  T. Duraivel }\; \\
Department of Mathematics, Pondicherry University, Pondicherry, India.\\
 {\tt tduraivel@gmail.com}
 
\noindent {\footnotesize {\bf  S. Mangayarcarassy }\; \\
Department of Mathematics, Puducherry Technological University, Pondicherry, India.\\
 {\tt dmangay@pec.edu}

\end{document}